 \newtheorem{theorem}{Theorem}[section]
 \newtheorem{corollary}{Corollary}[section]
 \newtheorem{lemma}{Lemma}[section]
\begin{document}
 \title{\textbf{Initial-boundary value problems for linear diffusion equation 
with multiple time-fractional derivatives}}

\author{\textbf{Zhiyuan Li$^1$}, \textbf{Masahiro Yamamoto$^2$}}

 \date{}

 \maketitle

 \begin{center}
 $^{1,2}$Graduate School of Mathematical Sciences,

 The University of Tokyo

 E-mail: $^{1}$zyli@ms.u-tokyo.ac.jp,\ $^{2}$myama@ms.u-tokyo.ac.jp
 \end{center}
 
\begin{abstract}
In this paper, we discuss initial-boundary value problems for linear diffusion 
equation with multiple time-fractional derivatives. By means of the 
Mittag-Leffler function and the eigenfunction
expansion, we reduce the problem to an integral equation for a solution, 
and we apply the fixed-point
theorem to prove the unique existence and the H\"older regularity of solution. 
For the case of the
homogeneous equation, the solution can be analytically extended to a sector 
$\{z\in\mathbb{C};z\neq0,|\arg z|<\frac{\pi}{2}\}$. 
In the case where all the coefficients of the 
time-fractional derivatives are positive constants, by the Laplace transform and 
the analyticity, we can prove that if a function satisfies the fractional diffusion equation
and the homogeneous Neumann boundary condition on arbitrary subboundary 
as well as the homogeneous Dirichlet boundary condition on the whole 
boundary, then it vanishes identically.
 \end{abstract}

\textbf{Keywords:} fractional diffusion equation, multiple time-fractional
derivatives, initial-boundary value problem,  eigenfunction expansion, fixed point, 
unique existence of solution

\section{Introduction}
We assume $\Omega$ to be a bounded domain in $\mathbb{R}^d$ with sufficiently 
smooth boundary $\partial \Omega$.
 We consider an initial-boundary value problem for a diffusion equation with 
multiple fractional time derivatives:
 \begin{equation} \label{multifrac}
   \left\{ {\begin{array}{*{20}c}
  { \partial_t^{\alpha_1} u(t) 
+ \sum_{j=2}^{\ell} q_j \partial_t^{\alpha_j} u(t)
   =-\mathcal{A}u(t) + B(x)\cdot \nabla u(t) + F(x,t) },\ t>0, 
\hfill \\
  {u(x,0)=a,\ x\in \Omega,} \hfill \\
  {u(x,t)=0,\ x\in \partial\Omega,\ t \in (0,T).} \hfill
 \end{array} } \right.
 \end{equation}
Here $0<\alpha_{\ell} < \cdots < \alpha_2 < \alpha_1 <1$.
For $\alpha\in(0,1)$, by $\partial_t^{\alpha}$ 
we denote the Caputo fractional derivative with respect to $t$:
 \[
\partial_t^{\alpha} g(t) = \frac{1}{\Gamma(1-\alpha)} 
\int_0^t (t-\tau)^{-\alpha}\frac{d}{d\tau} g(\tau) d\tau 
 \]
and $\Gamma$ is the Gamma function. See, e.g., Podlubny \cite{Podlubny} 
and Kilbas et al \cite{Kil} for the definition and properties of the Caputo
derivative.

The operator $\mathcal{A}$ denotes a second-order partial differential 
operator in the following form
 \[
 (-\mathcal{A}u)(x)
= \sum_{i,j=1}^d \frac{\partial}{\partial x_i}\left(a_{ij}(x)
\frac{\partial}{\partial x_j}u(x)\right)
          + b(x)u(x),\ x\in \Omega,
 \]
for $u\in H^2(\Omega) \cap H_0^1(\Omega)$, 
and we assume that $a_{ij}=a_{ji}\in C^1(\overline{\Omega})$, $1\le i,j\le d$, 
$b\in C(\overline{\Omega})$, $b(x) \leq 0$ 
 for $x\in \overline{\Omega}$ and that there exists a constant $\nu>0$ 
such that
 \[
\nu \sum_{j=1}^d \xi_j^2 \le \sum_{j,k=1}^d a_{jk}(x) \xi_j\xi_k, 
\ x\in\overline{\Omega},\ \xi\in\mathbb{R}^d.
 \]

The classical diffusion equation with integer-order derivative has played 
important roles in modelling contaminants diffusion processes. 
However, in recent two decades, more experimental data in some  
diffusion processes in highly heterogeneous media, show that 
the classical model may be inadequate in order to interpret experimental data.
For example, Adams and Gelhar \cite{AdGe} points out that field data in 
a saturated zone of a highly heterogeneous aquifer indicate a long-tailed 
profile in the spatial distribution of densities as the time passes, which 
is difficult to be interpreted by the classical diffusion equation.
The above phenomenon of long-tailed profile has been investigated by 
many researchers, and see Berkowitz et al \cite{BSS}, Giona et al \cite{GGR}, 
Y. Hatano and N. Hatano \cite{Hatano}, and the references therein. 
For better model equation, an equation where the first-order time derivative 
is replaced by a derivative of fractional order $\alpha\in 
(0,1)$, has been proposed.
As defined below, the fractional derivative possesses the memory effect 
and leads to realization of slow diffusion. 
This modified model is presented as a useful approach for the description 
of transport dynamics in complex system that are governed by anomalous 
diffusion and non-exponential relaxation patterns,
and attracted great attention from different areas. For numerical calculation, 
see Beson et al \cite{BSMW}, Meerschaert et al \cite{MT}, 
Diethelm and Luchko \cite{DiLu} and the references therein. 
For theoretical aspects, see Gorenflo et al \cite{GLZ}, Hanyaga \cite{Hanyaga},
Luchko and Gorenflo \cite{Luchko00}, 
Luchko \cite{Luchko0,Luchko,Luchko1}, Sakamoto and Yamamoto 
\cite{SakYam}, Xu et al \cite{XCY}, etc. 
From a viewpont of the stochastic analysis, one can regard the time-fractional 
diffusion equation as a macroscopic model derived from the continuous-time 
random walk. Metzler and Klafter \cite{MK}
demonstrated that a fractional diffusion equation describes a non-Markovian 
diffusion process with a memory. Roman and Alemany \cite{RA} investigated 
continuous-time random walks on fractals and showed that the average 
probability density of random walks on fractals obeys a diffusion equation 
with a fractional time derivative asymptotically.

In some recent publications such as e.g. \cite{Chechkin1, Chechkin2, Sokolov},
the time-fractional diffusion equations of distributed order is investigated. 
A distributed order derivative is an integral of fractional derivatives 
with respect to continuously changing orders.
Differential equations of the distributed order and some of their applications 
were considered e.g., in \cite{Kochubei, Meerschaert2, Umarov}. 
One important particular case of the time-fractional diffusion
equation of distributed order is that the weight function is taken 
in form of a finite linear combination of the Dirac $\delta$-functions with 
the positive weight coefficients (see e.g., \cite{Luchko0, Luchko2}). 
This yields the so-called diffusion equation with multiple time-fractional 
derivatives, which is studied in our paper. 
As for diffusion equations with multiple fractional time derivatives, 
see also Jiang et al \cite{JLTB}, Gejji et al \cite{DGB}, 
and the the references therein. 
The article \cite{JLTB} discusses the case where the spatial dimension is one,
the coefficients are constant and the spatial fractional derivative is 
considered, and establishes the formula of the solution. 
In the paper \cite{DGB}, a solution to an initial-boundary value problem 
is formally represented by Fourier series and the multivariate Mittag-Leffler 
function.  As for multivariate Mittag-Leffler functions, see e.g., 
\cite{Luchko00}.
However no proofs for the convergence of the series and for the uniqueness 
of the solution are given in \cite{DGB}. A proof of the convergence of the 
series defining the solution of the more general distributed
order fractional Cauchy problems in bounded domains can be found in the paper 
\cite{Meer}.  The paper \cite{Luchko2} proves the unique existence of the
solution, the maximum principle and related properties in the case where 
the coefficients of the time derivatives are positive and depedenent on 
$x$, and the arguments are based on the Fourier method, that is,
the separation of the variables.

These papers mainly discuss the case where the spatial differential 
operators is a symmetric elliptic elliptic operator.

The paper \cite{SB} proves the uniqueness and the regularity of solution 
to an initial-boundary problem for a symmetric fractional diffusion 
equation with two terms of time-fractional derivatives, by assuming the 
existence of the solution.  The method is similar to \cite{Luchko00}
and \cite{SakYam}.

In this article, following \cite{Luchko2} and \cite{SB}, we deal with 
the initial-boundary value problems for linear diffusion equation 
with multiple time-derivatives. The difference is that here 
we investigate the linear non-symmetric diffusion equation with 
the variable coefficients of fractional 
time derivatives not necessary constant or positive variable. 
Such kind of equation simulates the advection and so can be regarded as
more feasible model equation than symmetric fractional diffusion equations 
in modelling diffusion in porous media.

The rest of this article is organized as follows:\\ 
Section 2: The fixed point method is applied to prove 
unique existence as well as regularity of solution to (\ref{multifrac}).\\
Section 3: Based on the existence result in section 2, we prove 
the regularity of H\"{o}lder of the solution step by step from the 
continuous regularity to the H\"{o}lder regularity with index $\theta$ 
under the assumption that initial condition $a=0$ and the source term 
$F\in C^{\theta}([0,T];L^2(\Omega))$ with the compatibility condition 
$F(0)=0$.\\
Section 4: We prove that the solution can be analytically 
extended to a sector $\{z\in\mathbb{C};z\neq0,|\arg z|<\frac{\pi}{2}\}$ 
when $F=0$ and $a\in L^2(\Omega)$.
\\
Section 5: The analyticity of the solution and the Laplace 
transform are applied to show that 
if a function satisfies the fractional diffusion equation and 
the homogeneous Neumann boundary condition on arbitrary subboundary 
as well as the homogeneous Dirichlet boundary condition on the whole 
boundary, then it vanishes identically.  This is a weak type of 
unique continuation.


 \section{Existence, uniqueness and regularity of the solution}
 Let $L^2(\Omega)$ be a usual $L^2$-space with the inner product 
$(\cdot,\cdot)$, and $H^{k}(\Omega)$, 
 $H_0^1(\Omega)$ denote Sobolev spaces (e.g., Adams \cite{Adams}). 
 We set $\|a\|_{L^2(\Omega)}=(a,a)^{\frac{1}{2}}$.
 
 We define an operator $A$ in $L^2(\Omega)$ by
 \[
(Au)(x)=(\mathcal{A}u)(x),\ x\in\Omega,\ D(A)=H^2(\Omega) \cap H_0^1(\Omega).
 \]
 Then the fractional power $A^{\gamma}$ is defined for $\gamma\in\mathbb{R}$(e.g., \cite{Pazy}), 
 and $D(A^{\gamma})\subset H^{2\gamma}(\Omega)$, $D(A^{\frac{1}{2}})=H_0^1(\Omega)$ for example. 
 We note that $\|u\|_{D(A^{\gamma})}:=\|A^{\gamma}u\|_{L^2(\Omega)}$ is stronger than 
 $\|u\|_{L^2(\Omega)}$ for $\gamma>0$. 
 
Since $A$ is a symmetric uniformly elliptic operator, the spectrum of $A$ is entirely composed of eigenvalues
and counting according to the multiplicities, we can set $0<\lambda_1 \le \lambda_2 \le \cdots$.
By $\phi_n \in D(A)$, we denote the orthonormal eigenfunction corresponding to $\lambda_n$: 
 $A\phi_n=\lambda_n\phi_n$. Then the sequence $\{\phi_n\}_{n\in\mathbb{N}}$ is orthonormal basis in $L^2(\Omega)$.
 Then we see that 
 \[
D(A^{\gamma}) 
= \left\{\psi\in L^2(\Omega): \sum_{n=1}^{\infty} \lambda_n^{2\gamma} |(\psi,\phi_n)|^2<\infty\right\} 
 \]
 and that $D(A^{\gamma})$ is a Hilbert space with the norm
 \[
\|\psi\|_{D(A^{\gamma})}=\left( \sum_{n=1}^{\infty} \lambda_n^{2\gamma} |(\psi,\phi_n)|^2 \right)^{\frac{1}{2}}.
 \]
 Moreover we define the Mittag-Leffler function by
 \[
E_{\alpha,\beta}(z):=\sum_{k=0}^{\infty}\frac{z^k}{\Gamma(\alpha k+\beta)},\ z\in\mathbb{C},
 \]
 where $\alpha>0$ and $\beta\in\mathbb{R}$ are arbitrary constants. By the power series, we can directly verify that
 $E_{\alpha,\beta}(z)$ is an entire function of $z\in\mathbb{C}$.
 
 Now we define an operator $S(t):L^2(\Omega)\rightarrow L^2(\Omega)$ for $t>0$ by
\begin{align}\label{S(t)}
S(t)a:=\sum_{n=1}^{\infty} (a,\phi_n) E_{\alpha_1,1}(-\lambda_nt^{\alpha_1}) \phi_n \ \text{in}\ L^2(\Omega) 
\end{align}
 for $a\in L^2(\Omega)$. Moreover the term-wise differentiations are possible and give
\begin{align*}
S'(t)a
&:=-\sum_{n=1}^{\infty}\lambda_n(a,\phi_n)t^{\alpha_1-1}E_{\alpha_1,\alpha_1}(-\lambda_nt^{\alpha_1})\phi_n\ 
\text{in}\ L^2(\Omega)\\
S''(t)a
&:=-\sum_{n=1}^{\infty}\lambda_n(a,\phi_n)t^{\alpha_1-2}E_{\alpha_1,\alpha_1-1}(-\lambda_nt^{\alpha_1})\phi_n\ 
\text{in}\ L^2(\Omega)
\end{align*}
for $a\in L^2(\Omega)$, $t>0$. Moreover, it is known (See, e.g., Theorem 1.6 in \cite{Podlubny}) that there 
exists constant $C>0$ such that 
\begin{align} 
\|A^{\gamma-1}S'(t)\|&\leq C t^{\alpha_1-1-\alpha_1 \gamma},\ 0<t\leq T,\ 0\leq\gamma\leq 1. \label{Ar-1s'}\\
\|A^{\gamma-1}S''(t)\|&\leq C t^{\alpha_1-2-\alpha_1 \gamma},\ 0<t\leq T,\ 0\leq\gamma\leq 1 \label{Ar-1s''},
\end{align}
where $\|\cdot\|$ denotes the operator norm from $L^2(\Omega)$ to $L^2(\Omega)$.\\
 
Now we are ready to state our first main result.

 \begin{theorem}[a priori estimate] \label{Estimate}
Suppose that $a\in L^2(\Omega)$, $F\in L^{\infty}(0,T; L^2(\Omega))$, 
$B(x):=(B_1(x),\cdots,B_d(x))$, 
$B_i \in W^{2,\infty}(\Omega)$, $1\leq i \leq d$, $q \in W^{2,\infty}(\Omega)$.
Let $0<\alpha_{\ell}<\cdots < \alpha_1 <1$ and $u(t)\in D(A^{\gamma})$, $0<t\leq T$ 
satisfy (\ref{multifrac}).
Then
 \[
   \|u(t)\|_{H^{2\gamma}(\Omega)} 
  \leq C \left(t^{-\alpha_1 \gamma} \|a\|_{L^2(\Omega)}
+\|F\|_{L^{\infty}(0,T; L^2(\Omega))}\right), 
  0 < t \leq T,
 \]
 where $\gamma\in[\frac{1}{2},1)$ and $C>0$ is a constant which is independent 
of $a$, $F$ in (\ref{multifrac}), but may depend on $T$, $d$, $\{\alpha_{j}\}_{j=1}^{\ell}$, 
$\gamma$, $\Omega$ and the
coefficients of the operator $\mathcal{A}$, $\{q_i\}_{i=2}^{\ell}$.
\end{theorem}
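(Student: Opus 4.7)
The plan is to treat $\partial_t^{\alpha_1} + A$ as the principal operator, to apply the Mittag-Leffler family $S(t)$ defined in (\ref{S(t)}), and to turn (\ref{multifrac}) into a Duhamel-type integral equation in which the drift $B\cdot\nabla u$, the source $F$, and the lower-order Caputo derivatives $\partial_t^{\alpha_j} u$ ($j\geq 2$) act as perturbative forcing. Setting $f := F + B\cdot\nabla u - \sum_{j=2}^\ell q_j\,\partial_t^{\alpha_j} u$ and using the spectral identity $A^{-1} S'(\tau) = -\tau^{\alpha_1-1} E_{\alpha_1,\alpha_1}(-A\tau^{\alpha_1})$, one is led to
\[
u(t) = S(t) a - \int_0^t A^{-1} S'(t-s) f(s)\, ds.
\]
Applying $A^\gamma$ and invoking $\|A^\gamma S(t) a\|_{L^2}\leq Ct^{-\alpha_1\gamma}\|a\|_{L^2}$ (which follows from $|E_{\alpha_1,1}(-\lambda_n t^{\alpha_1})|\leq C/(1+\lambda_n t^{\alpha_1})$) together with (\ref{Ar-1s'}), we obtain
\[
\|A^\gamma u(t)\|_{L^2} \leq Ct^{-\alpha_1\gamma}\|a\|_{L^2} + C\int_0^t (t-s)^{\alpha_1(1-\gamma)-1}\|f(s)\|_{L^2}\, ds.
\]

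Next I bound each component of $f$. Trivially $\|F(s)\|_{L^2}\leq \|F\|_{L^\infty(0,T;L^2)}$, and the spectral inequality $\lambda_n \leq \lambda_1^{1-2\gamma}\lambda_n^{2\gamma}$ valid for $\gamma\geq 1/2$ gives $\|B\cdot\nabla u(s)\|_{L^2}\leq C\|A^{1/2} u(s)\|_{L^2}\leq C\|A^\gamma u(s)\|_{L^2}$. The delicate term is $\partial_t^{\alpha_j} u$. Working modewise on the Fourier coefficients $u_n(t) = (u(t),\phi_n)$,
\[
u_n(t) = E_{\alpha_1,1}(-\lambda_n t^{\alpha_1}) a_n + \int_0^t (t-s)^{\alpha_1-1} E_{\alpha_1,\alpha_1}(-\lambda_n(t-s)^{\alpha_1}) f_n(s)\, ds,
\]
and a termwise Caputo differentiation (equivalently, a Laplace-transform computation using the symbol $z^{\alpha_j}$) produces
\[
\partial_t^{\alpha_j} u_n(t) = -\lambda_n t^{\alpha_1-\alpha_j} E_{\alpha_1,\alpha_1+1-\alpha_j}(-\lambda_n t^{\alpha_1}) a_n + \int_0^t (t-s)^{\alpha_1-\alpha_j-1} E_{\alpha_1,\alpha_1-\alpha_j}(-\lambda_n(t-s)^{\alpha_1}) f_n(s)\, ds.
\]
The uniform bound $|E_{\alpha_1,\beta}(-x)|\leq C/(1+x)$, Parseval, and the integrability of $(t-s)^{\alpha_1-\alpha_j-1}$ (granted by $\alpha_j<\alpha_1$) then yield a weakly singular control of the form $\|\partial_t^{\alpha_j} u(s)\|_{L^2}\leq Cs^{-\alpha_j}\|a\|_{L^2} + C\int_0^s (s-r)^{\alpha_1-\alpha_j-1}\|f(r)\|_{L^2}\, dr$.

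Substituting these bounds and rearranging in a weighted norm absorbing the $s^{-\alpha_j}$ and $t^{-\alpha_1\gamma}$ singularities at $t=0$, one arrives at the closed inequality
\[
\|A^\gamma u(t)\|_{L^2} \leq C_0\bigl(t^{-\alpha_1\gamma}\|a\|_{L^2} + \|F\|_{L^\infty(0,T;L^2)}\bigr) + C\int_0^t (t-s)^{\alpha_1(1-\gamma)-1}\|A^\gamma u(s)\|_{L^2}\, ds,
\]
whose kernel is weakly singular since $\gamma<1$. Henry's generalized Gronwall inequality for weakly singular kernels closes the estimate, and the embedding $D(A^\gamma)\hookrightarrow H^{2\gamma}(\Omega)$ delivers the $H^{2\gamma}$ conclusion. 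The main obstacle is exactly the control of $\partial_t^{\alpha_j} u$: a naive attempt via the PDE itself would demand $\|Au\|_{L^2}$, which lies outside the $\gamma<1$ regularity we have, so the explicit spectral formula above (which retains the Mittag-Leffler decay $(1+\lambda_n t^{\alpha_1})^{-1}$ while paying only a weakly singular time price $(t-s)^{\alpha_1-\alpha_j-1}$) is essential to keep the bootstrap closed.
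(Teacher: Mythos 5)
Your overall skeleton (Duhamel formula for the principal part $\partial_t^{\alpha_1}+A$, the bounds $\|A^{\gamma}S(t)a\|\leq Ct^{-\alpha_1\gamma}\|a\|$ and (\ref{Ar-1s'}), the absorption of $B\cdot\nabla u$ via $\|A^{1/2}u\|\leq C\|A^{\gamma}u\|$ for $\gamma\geq\frac12$, and a weakly singular Gronwall closure) coincides with the paper's. Where you genuinely diverge is the treatment of the lower-order terms $q_j\partial_t^{\alpha_j}u$. The paper never estimates $\partial_t^{\alpha_j}u$ as such: in the integral equation (\ref{solution aF}) the convolution $\int_0^t A^{-1}S'(t-\tau)q_i\partial_\tau^{\alpha_i}u\,d\tau$ has already been integrated by parts so that the fractional derivative is transferred onto the kernel, producing the terms $I_2$, $I_3$, $I_4$ which involve only $u$ itself (against kernels $(t-\tau)^{\alpha_1-\alpha_i-1}$) and $a$; the only technical cost is showing $\int_0^1(1-\eta)^{\alpha_1-2}(\eta^{-\alpha_i}-1)\,d\eta<\infty$, after which a single closed integral inequality for $\|A^{\gamma}u(t)\|_{L^2(\Omega)}$ drops out. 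You instead keep $\partial_t^{\alpha_j}u$ as an unknown and derive for it an explicit modewise spectral representation, $\partial_t^{\alpha_j}u_n(t)=-\lambda_nt^{\alpha_1-\alpha_j}E_{\alpha_1,\alpha_1+1-\alpha_j}(-\lambda_nt^{\alpha_1})a_n+\int_0^t(t-s)^{\alpha_1-\alpha_j-1}E_{\alpha_1,\alpha_1-\alpha_j}(-\lambda_n(t-s)^{\alpha_1})f_n(s)\,ds$, which is correct (it is the Laplace-transform identity $z^{\alpha_j}/(z^{\alpha_1}+\lambda_n)$) and yields the same weakly singular exponents. Your route has the virtue of making the decay mechanism completely explicit mode by mode; the paper's route avoids any differentiation of the Duhamel term and arrives at a scalar inequality in one step.

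The one place your write-up is not yet a proof is the passage ``substituting these bounds and rearranging in a weighted norm \dots one arrives at the closed inequality.'' Because your $f$ contains $\partial_t^{\alpha_j}u$, your bound for $\|\partial_t^{\alpha_j}u(s)\|_{L^2(\Omega)}$ reintroduces $\|f(r)\|_{L^2(\Omega)}$ on the right, so direct substitution gives a \emph{coupled} pair of weakly singular integral inequalities for $\|A^{\gamma}u\|$ and $\|f\|$, not the single closed inequality you display. This is fixable: apply the generalized Gronwall lemma first to the $\|f\|$-inequality (treating $\|F\|_{L^{\infty}}+C\|A^{\gamma}u(s)\|+C\sum_js^{-\alpha_j}\|a\|$ as the inhomogeneity), then substitute the result into the $\|A^{\gamma}u\|$-inequality, checking that the convolutions of the kernels with $s^{-\alpha_j}$ stay bounded by $C_Tt^{-\alpha_1\gamma}$ (which they do since $\alpha_j<\alpha_1$). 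That extra Gronwall step should be written out; as stated, the ``closed inequality'' does not follow from what precedes it.
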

In \cite{SB} a similar fractional diffusion equation is discussed for 
$F=0$ and $B=0$ and a similar regularity is proved.  However \cite{SB}
assumes an extra condition $\alpha_1+\alpha_{\ell} > 1$, and our main 
result needs not such an assumption. 
\begin{proof}
Since $u$ is the solution of our initial-boundary value problem (\ref{doublefrac}),
by an argument similar to the proof of Theorem 1 in \cite{SB}, we find
\begin{align} \label{solution aF}
u(t)
=&-\int_0^t A^{-1}S'(t-\tau)(B\cdot\nabla u+F)d\tau 
\ +\sum_{i=2}^{\ell}\frac{1}{\Gamma(1-\alpha_i)}\int_0^t A^{-1}S'(t-\tau)(t-\tau)^{-\alpha_i}(q_iu(\tau))d\tau\nonumber\\
&+\sum_{i=2}^{\ell}\frac{1}{\Gamma(1-\alpha_i)}\int_0^t \int_0^{t-\tau} A^{-1}S''(t-\eta-\tau)(\eta^{-\alpha_i}-(t-\tau)^{-\alpha_i})d\eta q_iu(\tau) d\tau\nonumber\\
&-\sum_{i=2}^{\ell}\frac{1}{\Gamma(1-\alpha_i)}\int_0^t A^{-1}S'(t-\tau)\tau^{-\alpha_i}q_iad\tau+S(t)a
=: \sum_{k=1}^5 I_k(t),\ 0<t\leq T.
\end{align}

Now let us turn to the evaluation of the integral equation (\ref{solution aF}). For this purpose, 
taking the operator $A^{\gamma}(\gamma \in [\frac{1}{2},1))$ on the both sides of (\ref{solution aF}), 
we estimate each of the five terms separately.

By the fact $D(A^{\alpha})\subset D(A^{\beta})(\forall \alpha\geq\beta\geq0)$, noting that $D(A^{\frac{1}{2}})=H_0^1(\Omega)$, it follows that 
 \[
\|B\cdot\nabla u\|_{L^2(\Omega)}\leq C\|u\|_{H_0^1(\Omega)}\leq C_1\|u\|_{D(A^{\frac{1}{2}})}
\leq C_2\|u\|_{D(A^{\gamma})}, \forall \gamma\in[\frac{1}{2},1).
 \]
Moreover, since $q_j\in W^{2,\infty}(\Omega)$, $j=2,\cdots,\ell$ by the interpolation theory
(see, e.g., \cite{Lions}), 
we have $\|A^{\gamma}(q_ju)\|_{L^2(\Omega)}\leq C\|A^{\gamma}u\|_{L^2(\Omega)}$
for any $\gamma\in[0,1]$ and any $u\in D(A^{\gamma})$. 
Therefore for $I(t):=I_1(t) + I_2(t) + I_4(t)+I_5(t)$, $\forall 0<t\leq T$, using (\ref{Ar-1s'}) and
(\ref{Ar-1s''}), we have the following estimate
 \begin{align}
   \|I\|_{L^2(\Omega)}
\leq& C\|a\|_{L^2(\Omega)} (t^{-\alpha_1 \gamma} 
      + \sum_{i=2}^{\ell}t^{\alpha_1-\alpha_1\gamma-\alpha_i})
     +C \int_0^t (t-\tau)^{\alpha_1 - \alpha_1\gamma-1} \|F(\tau)\|_{L^2(\Omega)} d\tau\nonumber\\
    &+C \int_0^t (t-\tau)^{\alpha_1 - \alpha_1\gamma-1} \|A^{\gamma}u(\tau)\|_{L^2(\Omega)} d\tau
     +C \sum_{i=2}^{\ell} \int_0^t (t-\tau)^{\alpha_1 - 1 - \alpha_i} \|A^{\gamma}u(\tau)\|_{L^2(\Omega)}d\tau.\nonumber
 \end{align}
In order to evaluate $I_3(t)$ we need more technical treatment to the integral 
in $I_3(t)$. In fact, after the change of variable $\tilde{\tau}=t-\tau$, and letting $\tilde{\eta}=\dfrac{\eta}{\tilde{\tau}}$, we have
\begin{align}
\|I_3(t)\|_{L^2(\Omega)}
\leq& C\sum_{i=2}^{\ell} \int_0^t\Big[\int_0^1 (\tilde{\tau}-\tilde{\tau}\tilde{\eta})^{\alpha_1-2}((\tilde{\tau}\tilde{\eta})^{-\alpha_i}-\tilde{\tau}^{-\alpha_i})\tilde{\tau}d\tilde{\eta}
\Big]\|A^{\gamma}u(t-\tilde{\tau})\|_{L^2(\Omega)}d\tilde{\tau}\nonumber\\
=& C\sum_{i=2}^{\ell}\int_0^t\Big[\int_0^1 (1-\eta)^{\alpha_1-2}(\eta^{-\alpha_i}-1) d\eta
+1\Big]\tau^{\alpha_1-\alpha_i-1}\|A^{\gamma}u(t-\tau)\|_{L^2(\Omega)}d\tau\nonumber
\end{align}
Finally, we are to prove 
$J_i:=\int_0^1 (1-\eta)^{\alpha_1-2}(\eta^{-\alpha_i}-1) d\eta<\infty$, $j=2,\cdots,\ell$.
In fact, we represent $J_i$ as follows:
\begin{align}
J_i=\int_0^{\frac{1}{2}} (1-\eta)^{\alpha_1-2}(\eta^{-\alpha_i}-1) d\eta
  +\int_{\frac{1}{2}}^1 (1-\eta)^{\alpha_1-2}(\eta^{-\alpha_i}-1) d\eta=:J_i'+J_i''.\nonumber
\end{align}
For $J_i'$. Using the inequality $(1-\eta)^{\alpha_1-2}\leq (\frac{1}{2})^{\alpha_1-2}$, 
$\forall \eta\in[0,\frac{1}{2}]$, we derive
\[
J_i'
\leq C\int_0^{\frac{1}{2}} (\eta^{-\alpha_i}-1) d\eta <  \infty.
\]
For $J_i''$. Using the inequality $\eta^{-\alpha_i}-1\leq C(1-\eta)\eta^{-\alpha_i-1}$, 
$\forall \eta\in(0,1)$, $j=2,\cdots,\ell$. 
This inequality is proved, for example, by means of the mean value theorem. Then we can deduce
\[
J_i''
\leq C\int_{\frac{1}{2}}^1 (1-\eta)^{\alpha_1-2}(1-\eta)\eta^{-\alpha_i-1} d\eta
=C\int_{\frac{1}{2}}^1 (1-\eta)^{\alpha_1-1}\eta^{-\alpha_i-1} d\eta
<  \infty.
\]
Collecting the estimates for $J_i'$ and $J_i''$, we have
\begin{align}
\|I_3(t)\|_{L^2(\Omega)} 
\leq C\sum_{i=2}^{\ell} \int_0^t\tau^{\alpha_1-\alpha_i-1}\|A^{\gamma}u(t-\tau)\|_{L^2(\Omega)}d\tau
=C\sum_{i=2}^{\ell}\int_0^t(t-\tau)^{\alpha_1-\alpha_i-1}\|A^{\gamma}u(\tau)\|_{L^2(\Omega)}d\tau.\nonumber
\end{align}
Finally $A^{\gamma}u$ can be estimated as follows: 
for $0<t\leq T$, $\gamma\in [\frac{1}{2},1)$,
 \begin{align}
    \|A^{\gamma}u(t)\|_{L^2(\Omega)}
\leq C\int_0^t \left({t-\tau}\right)^{\bar{\alpha} -1}
                                            \|A^{\gamma}u(\tau)\|_{L^2(\Omega)}d\tau
     +C(\|F\|_{L^{\infty}(0,T;L^2(\Omega))}+\|a\|_{L^2(\Omega)} t^{-\alpha_1 \gamma}),
 \nonumber
 \end{align}
 where $\bar{\alpha}=\min(\alpha_1-\alpha_1 \gamma, \alpha_1-\alpha_2))=\alpha_1-\max(\alpha_1 \gamma, \alpha_2)$.
 
Moreover, from the general Gronwall inequality (e.g., Lemma 7.1.1 in \cite{Henry}), we see that
 \[
\|A^{\gamma}u(t)\|_{L^2(\Omega)} 
\leq C
\left(t^{-\alpha_1\gamma} \|a\|_{L^2(\Omega)}+\|F\|_{L^{\infty}(0,T;L^2(\Omega))}\right),
   0<t\leq T,
 \]
 where the constant $C>0$ only depend on $d$,  $\{\alpha_j\}_{j=1}^{\ell}$, $\gamma$, $T$, $B$, $\Omega$ and the coefficients of $\mathcal{A}$, $\{q_i\}_{i=2}^{\ell}$.
\end{proof}

Next we consider the unique existence of our fractional diffusion equation. On the basis of the fact that
the solution $u$ to the problem (\ref{multifrac}) satisfies the integral equation (\ref{solution aF}), 
we call the function $u$ which satisfies (\ref{solution aF}), $u\in C([0,T];L^2(\Omega))$ and 
$u(t)\in H_0^1(\Omega)$, a.e. $t\in [0,T]$ as the mild solution of the initial-boundary problem
(\ref{multifrac}). 

For any fixed $T>0$, 
we define the operator $\mathcal{K}$ as follows:
\begin{align}\label{Ku}
\mathcal{K}u(t) 
=\sum_{k=1}^5 I_k(t),
\end{align}
where $I_k(t)$, $k=1,\cdots,5$ are defined in (\ref{solution aF}). The homogeneous equation is firstly
investigated. Local existence result for mild solution was established via Banach's Fixed Point Theorem. Namely, the following theorem holds.

\begin{theorem}[Local existence] \label{Fixed point small T}
Suppose that $0<\alpha_{\ell}<\cdots<\alpha_1<1$, $a\in L^2(\Omega)$, $B(x):=(B_1(x),\cdots,B_{d}(x))$, $B_i\in W^{2,\infty}(\Omega)(i=1,\cdots,d)$, $q_j\in W^{2,\infty}(\Omega)(2\leq j \leq \ell)$, and $F=0$. Then there exists a mild solution to (\ref{doublefrac}) in the space 
$L^p(0,\delta;H_0^1(\Omega)) \cap C([0,\delta];L^2(\Omega))$ with $\gamma \in [\frac{1}{2},1)$, 
where $\delta>0$ is small enough, and $p\in(\frac{1}{\alpha_1},\frac{2}{\alpha_1})$.
\end{theorem}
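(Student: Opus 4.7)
The plan is to apply Banach's fixed point theorem to the operator $\mathcal{K}$ defined in (\ref{Ku}) on the Banach space
\[
X_\delta := L^p(0,\delta; H_0^1(\Omega)) \cap C([0,\delta]; L^2(\Omega))
\]
equipped with the norm $\|u\|_{X_\delta} := \|u\|_{L^p(0,\delta; H_0^1(\Omega))} + \|u\|_{C([0,\delta]; L^2(\Omega))}$. The two things to verify are that $\mathcal{K}$ maps a suitable closed ball of $X_\delta$ into itself and that $\mathcal{K}$ is a strict contraction for sufficiently small $\delta>0$. Because $I_4$ and $I_5$ in (\ref{solution aF}) do not depend on $u$, while $I_1,I_2,I_3$ are linear in $u$, the bound on $\mathcal{K}u_1-\mathcal{K}u_2$ follows from the same estimates one uses for the self-mapping, applied to the difference $u_1-u_2$. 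Hence it suffices to produce sharp bounds on each $I_k$ in both pieces of the $X_\delta$-norm.

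For the $L^p(0,\delta;H_0^1(\Omega))$ component I would use (\ref{Ar-1s'}) and (\ref{Ar-1s''}) with $\gamma=\tfrac{1}{2}$, noting $H_0^1(\Omega)=D(A^{1/2})$. With $F=0$, writing $I_1(t)=-\int_0^t A^{-1}S'(t-\tau)\,B\!\cdot\!\nabla u(\tau)\,d\tau$ and using $\|B\!\cdot\!\nabla u(\tau)\|_{L^2}\le C\|u(\tau)\|_{H_0^1}$, one obtains
\[
\|A^{1/2}I_1(t)\|_{L^2(\Omega)}\le C\int_0^t (t-\tau)^{\alpha_1/2-1}\|u(\tau)\|_{H_0^1(\Omega)}\,d\tau,
\]
and Young's convolution inequality yields $\|I_1\|_{L^p(0,\delta;H_0^1)}\le C\delta^{\alpha_1/2}\|u\|_{L^p(0,\delta;H_0^1)}$. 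Analogous bounds with $t$-powers $t^{\alpha_1/2-\alpha_i}$ arise for $I_2$ and $I_4$, using the fact (already invoked in Theorem \ref{Estimate}) that multiplication by $q_j\in W^{2,\infty}(\Omega)$ is bounded on $D(A^{1/2})$. Finally $\|A^{1/2}S(t)a\|_{L^2}\le Ct^{-\alpha_1/2}\|a\|_{L^2}$ by a standard Mittag-Leffler estimate, and this bound is in $L^p(0,\delta)$ exactly because $p<2/\alpha_1$; this explains the upper limit on $p$ in the statement.

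For the $C([0,\delta];L^2(\Omega))$ component I would apply (\ref{Ar-1s'}) and (\ref{Ar-1s''}) with $\gamma=0$ and H\"older's inequality with conjugate exponent $p'$; the requirement $(\alpha_1-1)p'>-1$, i.e.\ $p'<1/(1-\alpha_1)$, is exactly the condition $p>1/\alpha_1$, which is the lower limit on $p$. This produces $\|I_1(t)\|_{L^2}\le C\delta^{\alpha_1-1/p}\|u\|_{L^p(0,\delta;H_0^1)}$ (the exponent $\alpha_1-1/p$ is positive precisely when $p>1/\alpha_1$), with parallel bounds for $I_2$, and for $I_4$ an estimate by a positive power of $\delta$ times $\|a\|_{L^2}$; while $S(t)a\in C([0,\delta];L^2)$ is immediate from uniform convergence of the defining Mittag-Leffler series. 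Continuity in $t$ of the $I_k$ follows by dominated convergence, since the kernels are locally $L^{p'}$ in $\tau$.

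The genuinely delicate term is $I_3$, because of the inner $\eta$-integral. Following the change of variables $\tilde\tau=t-\tau$ and $\tilde\eta=\eta/\tilde\tau$ used in the proof of Theorem \ref{Estimate}, the inner integral collapses to the finite constant $J_i+1$, and one is left with a single-integral kernel of the form $(t-\tau)^{\alpha_1-\alpha_i-1}$ multiplied by $\|A^\gamma u(\tau)\|_{L^2}$. Since $\alpha_1-\alpha_i>0$ this kernel is locally integrable, and the Young/H\"older machinery above gives bounds with a positive power of $\delta$. Assembling, one obtains $\|\mathcal{K}u_1-\mathcal{K}u_2\|_{X_\delta}\le C\delta^\kappa\|u_1-u_2\|_{X_\delta}$ for some $\kappa>0$, so choosing $\delta$ small with $C\delta^\kappa<1$ yields a unique fixed point by Banach's theorem, which is the desired mild solution. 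The main obstacle is the bookkeeping in $I_3$ --- in particular, making sure the $\eta$-integral manipulation continues to produce a kernel whose convolution yields a positive power of $\delta$ in both the $L^p(H_0^1)$ and $C(L^2)$ norms.
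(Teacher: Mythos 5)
Your proposal is correct and follows essentially the same route as the paper: the paper likewise sets up $X_\delta=L^p(0,\delta;D(A^{1/2}))\cap C([0,\delta];L^2(\Omega))$ with the sum norm, proves the $C(L^2)$ bound via H\"older's inequality with conjugate exponent $p'$ (using $p>1/\alpha_1$) in Lemma \ref{Ku L2 <} and the $L^p(H_0^1)$ bound via Young's convolution inequality (using $p<2/\alpha_1$ to integrate $t^{-\alpha_1 p/2}$) in Lemma \ref{Ku L1tilde <}, handles $I_3$ by the same change of variables reducing the inner integral to the constant $J_i$, and concludes by Banach's fixed point theorem for small $\delta$. Your identification of which endpoint of the interval for $p$ is responsible for which estimate matches the paper exactly.
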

In order to prove Theorem \ref{Fixed point small T}, we give some estimates, which are organized 
in the following lemmas.
\begin{lemma} \label{Ku L2 <}
Under the assumptions of Theorem \ref{Fixed point small T}. Then the following estimate
  \begin{align}
    \|\mathcal{K}u(t)\|_{L^2(\Omega)}
   \leq CT^{\alpha_1-\frac{1}{p}}\|u\|_{L^{p}(0,T;H_0^1(\Omega))}
       +C\sum_{i=2}^{\ell}T^{\alpha_1-\alpha_i}   \|u\|_{C([0,T];L^2(\Omega))}
       +C\|a\|_{L^2(\Omega)},\ t\in[0,T] \nonumber
  \end{align}
holds for each function $u\in L^{p}(0,T;H_0^1(\Omega))\cap C([0,T];L^2(\Omega))$, 
  $p\in(\frac{1}{\alpha_1},\frac{2}{\alpha_2})$.
\end{lemma}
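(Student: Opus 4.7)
The plan is to estimate each of the five terms $I_k(t)$ in (\ref{Ku}) separately, using the operator norm bounds (\ref{Ar-1s'}) and (\ref{Ar-1s''}) specialized to $\gamma = 0$, which yield $\|A^{-1}S'(t)\|\le Ct^{\alpha_1-1}$ and $\|A^{-1}S''(t)\|\le Ct^{\alpha_1-2}$. The three contributions to the final bound correspond naturally to three groups: $I_1$ produces the $\|u\|_{L^p(0,T;H_0^1(\Omega))}$ term; $I_2$ and $I_3$ produce the $\|u\|_{C([0,T];L^2(\Omega))}$ term; and $I_4,I_5$ produce the $\|a\|_{L^2(\Omega)}$ term.

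For $I_1(t)$ (with $F\equiv 0$), since $B_i \in W^{2,\infty}(\Omega)$ implies $\|B\cdot\nabla u(\tau)\|_{L^2(\Omega)} \le C\|u(\tau)\|_{H_0^1(\Omega)}$, I would bound
\[
\|I_1(t)\|_{L^2(\Omega)} \le C\int_0^t (t-\tau)^{\alpha_1-1} \|u(\tau)\|_{H_0^1(\Omega)}\,d\tau,
\]
then apply H\"older's inequality with the conjugate exponent $p'=p/(p-1)$. The singular kernel $(t-\tau)^{\alpha_1-1}$ belongs to $L^{p'}(0,T)$ precisely when $(\alpha_1-1)p' > -1$, i.e.\ when $p>\frac{1}{\alpha_1}$ (this is exactly the lower bound on $p$ in the hypothesis), and a direct computation gives $\|I_1(t)\|_{L^2(\Omega)} \le CT^{\alpha_1-1/p}\|u\|_{L^p(0,T;H_0^1(\Omega))}$. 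For $I_2(t)$, the multiplier $q_i\in W^{2,\infty}(\Omega)$ is bounded on $L^2(\Omega)$, so after estimating $\|q_i u(\tau)\|_{L^2(\Omega)} \le C\|u\|_{C([0,T];L^2(\Omega))}$ one is left with $\int_0^t(t-\tau)^{\alpha_1-1-\alpha_i}d\tau = C t^{\alpha_1-\alpha_i}$, which is finite precisely because $\alpha_i<\alpha_1$, yielding the $T^{\alpha_1-\alpha_i}$ factor.

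The main obstacle is $I_3(t)$, since it contains a double integral with singularities in both variables. I would apply exactly the change of variables $\tilde\tau = t-\tau$, $\tilde\eta = \eta/\tilde\tau$ already used in the proof of Theorem \ref{Estimate}, which reduces the inner integral to the Beta-type integral $J_i = \int_0^1(1-\eta)^{\alpha_1-2}(\eta^{-\alpha_i}-1)\,d\eta$. Finiteness of $J_i$ was established there by splitting at $\eta=\frac12$ and using the mean-value-theorem bound $\eta^{-\alpha_i}-1\le C(1-\eta)\eta^{-\alpha_i-1}$, and the same argument applies here verbatim. Once $J_i<\infty$ is in hand, the remaining single integral is $\int_0^t\tau^{\alpha_1-\alpha_i-1}\|u(t-\tau)\|_{L^2(\Omega)}d\tau$, bounded by $CT^{\alpha_1-\alpha_i}\|u\|_{C([0,T];L^2(\Omega))}$.

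Finally, $I_4(t)$ is treated by the Beta-function identity $\int_0^t(t-\tau)^{\alpha_1-1}\tau^{-\alpha_i}d\tau = t^{\alpha_1-\alpha_i}B(\alpha_1,1-\alpha_i)$, so $\|I_4(t)\|_{L^2(\Omega)} \le CT^{\alpha_1-\alpha_i}\|a\|_{L^2(\Omega)} \le C\|a\|_{L^2(\Omega)}$; for $I_5(t)=S(t)a$, the uniform bound $|E_{\alpha_1,1}(-\lambda_n t^{\alpha_1})|\le 1$ together with Parseval's identity in the expansion (\ref{S(t)}) gives $\|S(t)a\|_{L^2(\Omega)} \le \|a\|_{L^2(\Omega)}$. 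Adding the five estimates yields the stated inequality, with the constant $C$ absorbing $T$-dependent factors and the $B(\alpha_1,1-\alpha_i)$'s. The key constraint driving the hypothesis $p>\frac{1}{\alpha_1}$ appears only in the H\"older estimate for $I_1$; the upper bound $p<\frac{2}{\alpha_2}$ plays no role in this lemma but will be needed when the companion estimate in $H_0^1(\Omega)$ is derived in the next step toward Theorem \ref{Fixed point small T}.
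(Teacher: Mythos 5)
Your proposal is correct and follows essentially the same route as the paper: the paper's proof simply invokes ``the same argument as Theorem \ref{Estimate}'' to reach the intermediate bound $C\int_0^t(t-\tau)^{\alpha_1-1}\|u(\tau)\|_{H^1(\Omega)}d\tau + C\sum_{i=2}^{\ell}\int_0^t\tau^{\alpha_1-1-\alpha_i}d\tau\,\|u\|_{C([0,T];L^2(\Omega))}+C\|a\|_{L^2(\Omega)}$, which is exactly your term-by-term decomposition with the same grouping ($I_1$, then $I_2,I_3$ via the $J_i$ change of variables, then $I_4,I_5$), and then applies H\"older's inequality with $(\alpha_1-1)p'+1>0$ just as you do. Your side remarks on where $p>\frac{1}{\alpha_1}$ is used and where the upper bound on $p$ will matter are also accurate.
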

Here and henceforth in this section, $C>0$ denotes constants only depend on $d$, 
$\{\alpha_j\}_{j=1}^{\ell}$, $\gamma$, $\Omega$ and the coefficients of the operator $\mathcal{A}$, $\{q_i\}_{i=2}^{\ell}$. Moreover, $C_T>0$ denotes constants only depending on $T$, $d$, $\{\alpha_j\}_{j=1}^{\ell}$, $\gamma$, $\Omega$ and the coefficients of the operator $\mathcal{A}$, $\{q_i\}_{i=2}^{\ell}$.
\begin{proof}
Using the assumption $u\in L^{p}(0,T;H_0^1(\Omega))\cap C([0,T];L^2(\Omega))$, we apply the same argument to
Theorem \ref{Estimate} and conclude that, for any $t\in[0,T]$,
\begin{align}
    \|\mathcal{K}u\|_{L^2(\Omega)}
\leq C\int_0^t (t-\tau)^{\alpha_1-1} \|u(\tau)\|_{H^1(\Omega)}d\tau
     +C\sum_{i=2}^{\ell}\int_0^t \tau^{\alpha_1-1-\alpha_i}d\tau \|u\|_{C([0,T];L^2(\Omega))}
     +C\|a\|_{L^2(\Omega)}.  \nonumber
\end{align}
Moreover, let $p'>0$ be the conjugate number of $p$, that is, $\frac{1}{p}+\frac{1}{p'}=1$, 
Since $p\in(\frac{1}{\alpha_1},\frac{2}{\alpha_1})$ implies 
\[
(\alpha_1-1)p'+1 = p'(\alpha_1-1 + \frac{1}{p'})=p'(\alpha_1-\frac{1}{p})>0,
\]
it follows from H\"{o}lder's inequality that, for $0\leq t\leq T$,
\begin{align*}
&\|\mathcal{K}u(t)\|_{L^2(\Omega)}\\
\leq&\ C\big(\int_0^t (t-\tau)^{(\alpha_1-1)p'}d\tau \big)^{\frac{1}{p'}}
       \big(\int_0^t \|u(\tau)\|_{H^1(\Omega)}^{p}d\tau\big)^{\frac{1}{p}}
     +C\sum_{i=2}^{\ell}t^{\alpha_1-\alpha_i} \|u\|_{C([0,T];L^2(\Omega))}
     +C\|a\|_{L^2(\Omega)}\\
\leq&\ CT^{\alpha_1-\frac{1}{p}}\|u\|_{L^p(0,T;H_0^1(\Omega))}
+C\sum_{i=2}^{\ell}T^{\alpha_1-\alpha_i}\|u\|_{C([0,T];L^2(\Omega))}+C\|a\|_{L^2(\Omega)}.
\end{align*}
\end{proof}

\begin{lemma} \label{Ku L1tilde <}
Under the assumptions of Theorem \ref{Fixed point small T}. Then for $\forall u\in L^{p}(0,T;H_0^1(\Omega))$ the following estimate holds:
\begin{align}
    \|\mathcal{K}u\|_{L^{p}(0,T;H_0^1(\Omega))}
\leq C\cdot(\sum_{i=2}^{\ell}T^{\alpha_1-\alpha_i} + T^{\frac{\alpha_1}{2}})
       \|u\|_{L^{p}(0,T;H_0^1(\Omega))}
     +CT^{\frac{1}{p}-\frac{\alpha_1}{2}} \|a\|_{L^2(\Omega)}.\nonumber
\end{align}
\end{lemma}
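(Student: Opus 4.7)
Since $\|v\|_{H_0^1(\Omega)}$ is equivalent to $\|A^{1/2}v\|_{L^2(\Omega)}$, my plan is to estimate $\|A^{1/2}I_k(t)\|_{L^2}$ pointwise in $t$ for each $k=1,\dots,5$ and then read off the $L^p(0,T)$ norm. All the needed ingredients are already available from the proof of Theorem \ref{Estimate}: the operators $S(t)$, $S'(t)$, $S''(t)$ commute with $A^{\gamma}$ by the spectral construction, $\|B\cdot\nabla u\|_{L^2}\le C\|u\|_{H_0^1}$, the interpolation bound $\|A^{1/2}(q_iu)\|_{L^2}\le C\|A^{1/2}u\|_{L^2}$ holds, and (\ref{Ar-1s'})--(\ref{Ar-1s''}) are at hand with $\gamma$ of our choice.

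Combining these, I anticipate the pointwise estimates
\begin{align*}
\|A^{1/2}I_1(t)\|_{L^2}
 &\le C\int_0^t(t-\tau)^{\alpha_1/2-1}\|u(\tau)\|_{H_0^1}\,d\tau, \\
\|A^{1/2}I_k(t)\|_{L^2}
 &\le C\sum_{i=2}^\ell\int_0^t(t-\tau)^{\alpha_1-1-\alpha_i}\|u(\tau)\|_{H_0^1}\,d\tau,\quad k=2,3,\\
\|A^{1/2}I_4(t)\|_{L^2}
 &\le C\|a\|_{L^2}\sum_{i=2}^\ell\int_0^t(t-\tau)^{\alpha_1/2-1}\tau^{-\alpha_i}\,d\tau,\\
\|A^{1/2}I_5(t)\|_{L^2}
 &\le Ct^{-\alpha_1/2}\|a\|_{L^2}.
\end{align*}
For $I_1,I_2,I_3$, the kernels $t^{\alpha_1/2-1}$ and $t^{\alpha_1-1-\alpha_i}$ lie in $L^1(0,T)$ because $\alpha_1>0$ and $\alpha_i<\alpha_1$, with $L^1$-norms of order $T^{\alpha_1/2}$ and $T^{\alpha_1-\alpha_i}$ respectively; Young's convolution inequality $L^1\ast L^p\hookrightarrow L^p$ then yields the two $\|u\|_{L^p(0,T;H_0^1)}$ terms of the lemma. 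For $I_5$, the hypothesis $p<2/\alpha_1$ ensures $\|t^{-\alpha_1/2}\|_{L^p(0,T)}=CT^{1/p-\alpha_1/2}$; and for $I_4$, a Beta-function identity collapses the inner integral to $Ct^{\alpha_1/2-\alpha_i}$, whose $L^p(0,T)$-norm equals $CT^{\alpha_1-\alpha_i}\cdot T^{1/p-\alpha_1/2}$, dominated by $CT^{1/p-\alpha_1/2}$ on any bounded range of $T$.

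The main obstacle I foresee is the double integral in $I_3$: the naive bound $\|A^{-1/2}S''(t-\eta-\tau)\|\le C(t-\eta-\tau)^{\alpha_1/2-2}$ produces an inner $\eta$-integrand that is not absolutely integrable at the endpoints. I would dispose of this exactly as in the proof of Theorem \ref{Estimate}: after the substitution $\tilde{\tau}=t-\tau$ followed by the rescaling $\tilde{\eta}=\eta/\tilde{\tau}$, the inner integral collapses to the $\tau$-independent constant $J_i=\int_0^1(1-\eta)^{\alpha_1-2}(\eta^{-\alpha_i}-1)\,d\eta$, which is finite by the splitting argument already carried out there. After this reduction, $I_3$ has exactly the same convolution structure as $I_2$ and contributes the same $T^{\alpha_1-\alpha_i}$ factors, completing the plan.
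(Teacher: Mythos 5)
Your proposal is correct and follows essentially the same route as the paper: a pointwise bound on $\|A^{1/2}\mathcal{K}u(t)\|_{L^2(\Omega)}$ using (\ref{Ar-1s'})--(\ref{Ar-1s''}) with $\gamma=\tfrac12$ for the $B\cdot\nabla u$ and $a$ terms and with $\gamma=1$ after moving $A^{1/2}$ onto $q_iu$ for the $I_2,I_3$ terms (including the same $J_i$-finiteness reduction for $I_3$), followed by Young's convolution inequality $L^1*L^p\hookrightarrow L^p$ and the condition $\tfrac{\alpha_1}{2}p<1$ for the $t^{-\alpha_1/2}\|a\|_{L^2(\Omega)}$ contribution. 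The only cosmetic difference is that you treat the five terms $I_k$ separately and collapse $I_4$ via the Beta function before taking the $L^p$ norm, whereas the paper bundles everything into a single display; your absorption of the resulting $T^{\alpha_1/2-\alpha_i+1/p}$ term matches (and slightly cleans up) the corresponding extra term left in the paper's final estimate.
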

\begin{proof}
Similar to the calculation in Theorem \ref{Estimate}, we find
\begin{align*}
      \|A^{\frac{1}{2}}\mathcal{K}u(t)\|_{L^2(\Omega)}
\leq& C\int_0^t ((t-\tau)^{\frac{\alpha_1}{2}-1}
     +\sum_{i=2}^{\ell}(t-\tau)^{\alpha_1-\alpha_i-1})
\|A^{\frac{1}{2}}u(\tau)\|_{L^2(\Omega)}d\tau\\
     &+C(t^{-\frac{\alpha_1}{2}}
         +\sum_{i=2}^{\ell}t^{\frac{\alpha_1}{2}-\alpha_i})\|a\|_{L^2(\Omega)}.
\end{align*}
Therefore, since $|a+b|^p \leq 2^p(|a|^p+|b|^p)$, $\forall a,b\in\mathbb{R}$, we have
\begin{align*}
    \int_0^T \|\mathcal{K}u(t)\|_{D(A^{\frac{1}{2}})}^{p}dt 
\leq&\ C2^{p}\int_0^T\left(
      \int_0^t ((t-\tau)^{\frac{\alpha_1}{2}-1}
         +\sum_{i=2}^{\ell}(t-\tau)^{\alpha_1-\alpha_i-1})
      \|A^{\frac{1}{2}}u(\tau)\|_{L^2(\Omega)}d\tau
      \right)^{p} dt\\
     &+C2^{p}\int_0^T (t^{-\frac{\alpha_1}{2}}+\sum_{i=2}^{\ell}t^{\frac{\alpha_1}{2}-\alpha_i})^p\|a\|_{L^2(\Omega)}^{p}dt.
\end{align*}
By Young's inequality for the convolution, noting $p\in(\frac{1}{\alpha_1},\frac{2}{\alpha_1})$ implies
$\frac{\alpha_1}{2}p<1$, so that
\begin{align*}
\int_0^T \|\mathcal{K}u(t)\|_{D(A^{\frac{1}{2}})}^{p}dt
\leq& C\big(\int_0^T (\tau^{\frac{\alpha_1}{2}-1}
                     +\sum_{i=2}^{\ell}\tau^{\alpha_1-1-\alpha_i})d\tau\big)^{p}
      \int_0^T \|u(\tau)\|_{D(A^{\frac{1}{2}})}^{p}d\tau\\
    &+CT^{1-\frac{\alpha_1}{2}p} \|a\|_{L^2(\Omega)}^{p} 
    +C\sum_{i=2}^{\ell}T^{(\frac{\alpha_1}{2}-\alpha_i)p+1}\|a\|_{L^2(\Omega)}^p
\end{align*}
Finally, we obtain
\begin{align*}
\|\mathcal{K}u\|_{L^{p}(0,T;D(A^{\frac{1}{2}}))}
\leq& C(T^{\frac{\alpha_1}{2}}+\sum_{i=2}^{\ell}T^{\alpha_1-\alpha_j})\|u\|_{L^{p}(0,T;H_0^1(\Omega))}\\
    &+CT^{\frac{1}{p}-\frac{\alpha_1}{2}} \|a\|_{L^2(\Omega)}
    +C\sum_{i=2}^{\ell}T^{(\frac{\alpha_1}{2}-\alpha_i)p+1}\|a\|_{L^2(\Omega)}^p.
\end{align*}
 \end{proof}

\begin{proof}[\bf The proof of Theorem \ref{Fixed point small T}]
We set $X_T:=L^{p}(0,T;D(A^{\frac{1}{2}}))\cap C([0,T];L^2(\Omega))$ 
and $\|\cdot\|_T:=\|\cdot\|_{L^{p}(0,T;D(A^{\frac{1}{2}}))}+\|\cdot\|_{C([0,T];L^2(\Omega))}$.
It is easy to see that $\|\cdot\|_T$ is a norm of $X_T$, and $(X_T,\|\cdot\|_T)$ is a Banach space.

Assuming $u_1, u_2\in X_T$, by an argument similar to the proof of Lemma \ref{Ku L2 <} and 
Lemma \ref{Ku L1tilde <}, we derive that there exists a constant $C>0$ such that the following estimates
hold:
 \begin{align*} 
&\|\mathcal{K}u_1-\mathcal{K}u_2\|_{C([0,T];L^2(\Omega))}
 \leq CT^{\alpha_1-\frac{1}{p}} \|u_1-u_2\|_{L^{p}(0,T;H_0^1(\Omega))}
     +C\sum_{i=2}^{\ell}T^{\alpha_1-\alpha_i} \|u_1-u_2\|_{C([0,T];L^2(\Omega))}\\
&\|\mathcal{K}u_1-\mathcal{K}u_2\|_{L^{p}(0,T;H_0^1(\Omega))}
   \leq C(T^{\frac{\alpha_1}{2}} + \sum_{i=2}^{\ell}T^{\alpha_1-\alpha_i})
    \|u_1-u_2\|_{L^{p}(0,T;H_0^1(\Omega))}.
  \end{align*}

Setting $T=\delta$ , the above calculations leads to
\begin{align}
  \|\mathcal{K}u_1 - \mathcal{K}u_2\|_{C([0,\delta];L^2(\Omega))}
&\leq C[\delta^{\alpha_1-\frac{1}{p}} 
        + \sum_{i=2}^{\ell}\delta^{\alpha_1-\alpha_i}]\|u_1 - u_2\|_{X_{\delta}},
\label{constant C1}\\
  \|\mathcal{K}u_1 - \mathcal{K}u_2\|_{L^{p}(0,\delta;H_0^1(\Omega))}
&\leq C[\delta^{\frac{\alpha_1}{2}} + \sum_{i=2}^{\ell}\delta^{\alpha_1-\alpha_i}]
      \|u_1 - u_2\|_{L^{p}(0,\delta;H_0^1(\Omega))}.\label{constant C2}
\end{align}
From the above two estimates (\ref{constant C1}) and (\ref{constant C2}), it follows that the operator
$\mathcal{K}$ is a contracted operator from $(X_{\delta},\|\cdot\|_{\delta})$ into itself when $\delta>0$ 
is small enough. Consequently, there exists a unique fixed point $u\in X$ such that $\mathcal{K}u(t)=u(t)$, $\forall t\in[0,\delta]$.
\end{proof}

On the basis of Theorem \ref{Fixed point small T}, we can further prove the global existence of the mild solution to the initial-boundary value problem (\ref{multifrac}).
\begin{theorem}[Global existence] \label{Fixed point any T}
Under the assumption of Theorem \ref{Fixed point small T}. Then for any $T>0$ being fixed, there exists a mild solution
to (\ref{multifrac}) in $L^{p}(0,T;H_0^1(\Omega)) \cap C([0,T];L^2(\Omega))$, $p\in(\frac{1}{\alpha_1},\frac{2}{\alpha_1})$.
\end{theorem}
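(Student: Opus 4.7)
The plan is to extend the local solution from Theorem \ref{Fixed point small T} to an arbitrary time interval $[0,T]$ by iterating the fixed-point construction on successive subintervals of length $\delta$. The crucial observation is that the contraction factor appearing in (\ref{constant C1})--(\ref{constant C2}) depends only on $\delta$ and on the structural constants (namely $C$, $\{\alpha_j\}_{j=1}^{\ell}$, $p$, $\Omega$, and the coefficients of $\mathcal{A}$, $\{q_i\}$, $B$), and is \emph{independent} of the initial datum $a$ and of any particular solution. Hence one may fix a single $\delta>0$ for which this contraction factor does not exceed $\frac{1}{2}$, and then reach $T$ after at most $\lceil T/\delta\rceil$ iterations.

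For the inductive step, assume that a mild solution $u_{\mathrm{past}}\in L^{p}(0,k\delta;H_0^1(\Omega))\cap C([0,k\delta];L^2(\Omega))$ has been constructed. For $t\in[k\delta,(k+1)\delta]$, split each Volterra integral $I_j(t)$ from (\ref{solution aF}) as
\[
I_j(t)=\int_0^{k\delta}(\cdots)\,d\tau+\int_{k\delta}^t(\cdots)\,d\tau=:H_j(t)+\widetilde{I}_j(t),
\]
where $H_j(t)$ is determined entirely by $u_{\mathrm{past}}$ and $a$, while $\widetilde{I}_j(t)$ depends linearly on the unknown extension $u\big|_{[k\delta,(k+1)\delta]}$. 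Define
\[
\widetilde{\mathcal{K}}u(t):=\sum_{j=1}^{5}H_j(t)+\sum_{j=1}^{5}\widetilde{I}_j(t),\qquad t\in[k\delta,(k+1)\delta],
\]
and rerun the estimates of Lemmas \ref{Ku L2 <} and \ref{Ku L1tilde <} on the shifted interval. Since only the length $\delta$ and the difference $t-\tau$ enter the kernel bounds (\ref{Ar-1s'})--(\ref{Ar-1s''}), the contraction factor obtained is exactly the same as in (\ref{constant C1})--(\ref{constant C2}). Banach's fixed-point theorem then produces a unique $u\in L^{p}(k\delta,(k+1)\delta;H_0^1(\Omega))\cap C([k\delta,(k+1)\delta];L^2(\Omega))$ with $u=\widetilde{\mathcal{K}}u$.

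Gluing $u_{\mathrm{past}}$ and $u$ yields a function on $[0,(k+1)\delta]$ that by construction satisfies the single integral equation $u=\mathcal{K}u$ of (\ref{Ku}) on the whole interval $[0,(k+1)\delta]$. This automatically gives the required space membership, as well as continuity in $L^2(\Omega)$ across the gluing point $t=k\delta$ (continuity being inherited from the Volterra representation, whose only singularity occurs at $\tau=t$). After $\lceil T/\delta\rceil$ such steps one has a mild solution on $[0,T]$.

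The main obstacle is verifying that the contraction factor is genuinely translation-invariant in $t$, so that the \emph{same} $\delta$ works at every step of the iteration. This reduces to checking that the kernels $(t-\tau)^{\alpha_1-1}$, $(t-\tau)^{\alpha_1-\alpha_i-1}$ and $(t-\tau)^{\alpha_1/2-1}$ used in Lemmas \ref{Ku L2 <}--\ref{Ku L1tilde <}, together with the subsequent H\"older and Young inequalities, depend only on $t-\tau$ and on the length of the integration window $[k\delta,t]$. Since this is the case, the same powers of $\delta$ arise regardless of the base point $k\delta$, and the contraction factor stays bounded by $\tfrac{1}{2}$ uniformly in $k$, completing the induction.
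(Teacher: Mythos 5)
Your overall strategy is the paper's own: freeze the already-constructed solution as a ``history'' term, set up a new fixed-point problem for the unknown continuation on the next window, observe that the contraction constants in (\ref{constant C1})--(\ref{constant C2}) depend only on the window length (because the kernels depend only on $t-\tau$), and iterate with one fixed $\delta$ until $T$ is reached. The only structural difference is cosmetic: the paper works on overlapping windows $[t_0,3t_0]$ with $t_0=\delta/2$ and identifies the new fixed point with the old solution on the overlap $[t_0,2t_0]$ by uniqueness, whereas you use abutting windows $[k\delta,(k+1)\delta]$; both gluings are legitimate.

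There is, however, one genuine gap: you never verify that $\widetilde{\mathcal K}$ maps $L^{p}(k\delta,(k+1)\delta;H_0^1(\Omega))\cap C([k\delta,(k+1)\delta];L^2(\Omega))$ into itself, i.e.\ that the frozen history $\sum_j H_j$ actually lies in that space. Lemmas \ref{Ku L2 <} and \ref{Ku L1tilde <} cover only the Volterra part on the new window, not $H_j$, and the history term is not harmless: it contains integrals such as $\int_0^{k\delta}(t-\tau)^{\frac{\alpha_1}{2}-1}\|A^{\frac{1}{2}}u_{\mathrm{past}}(\tau)\|_{L^2(\Omega)}\,d\tau$, whose integrand is singular both at $\tau=0$ (since $\|A^{\frac{1}{2}}u_{\mathrm{past}}(\tau)\|$ blows up like $\tau^{-\alpha_1/2}$) and at $\tau\to k\delta^{-}$ as $t\downarrow k\delta$; note that for $p<\frac{2}{\alpha_1}$ the exponent satisfies $(\frac{\alpha_1}{2}-1)p'+1<0$, so the naive H\"older bound used in Lemma \ref{Ku L2 <} does not apply here. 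This is precisely where the paper spends its effort: it first derives the a priori bounds (\ref{Agamma Estimate on small T}) and (\ref{L^2 Estimate on small T}) via the generalized Gronwall inequality, and only then shows $\|I(t)\|_{L^2(\Omega)}\le C_T\|a\|_{L^2(\Omega)}$ on the next window by an explicit Beta-type computation with $\int_0^{t_0}(t-\tau)^{\alpha_1-1}\tau^{-\alpha_1/2}\,d\tau$. Your argument needs either these pointwise a priori estimates on the history, or a Young-convolution bound treating $\|A^{\frac{1}{2}}u_{\mathrm{past}}\|\mathbf{1}_{[0,k\delta]}\in L^p$ against the $L^1$ kernel $s^{\frac{\alpha_1}{2}-1}$; without one of them the induction step is not closed.
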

\begin{proof} 
For any fixed $T>0$, without loss of generality, we assume that $T>\delta$, where $\delta$ is defined
in Theorem \ref{Fixed point small T}. On the interval $[0,\delta]$, 
we see that $u\in X_{\delta}:= L^{p}(0,\delta;H_0^1(\Omega)) \cap C([0,\delta];L^2(\Omega))$ satisfies
(\ref{solution aF}). Here for convenience, we set
\[
u(t) = \sum_{i=2}^{\ell}\int_0^t A^{-1}   S'(t-\tau) q_i(x)\partial_t^{\alpha_i} u(\tau)d\tau
        -\int_0^t A^{-1} S'(t-\tau) B(x)\cdot \nabla u(\tau) d\tau
        +S(t)a,
\]
where 
$\sum_{i=2}^{\ell}\int_0^t A^{-1}S'(t-\tau) q_i(x)\partial_t^{\alpha_i}u(\tau)d\tau
:=I_2(t)+I_3(t)+I_4(t)$, 
and $I_2(t)$, $I_3(t)$, $I_4(t)$ are defined in (\ref{solution aF}). 
An argument similar to the proof of Theorem \ref{Estimate} leads to
 \begin{align}
    \|A^{\frac{1}{2}}u(t)\|_{L^2(\Omega)} 
\leq C_T\int_0^t \left({t-\tau}\right)^{\bar{\alpha} -1}
                                            \|A^{\frac{1}{2}}u(\tau)\|_{L^2(\Omega)}d\tau
     +C_T\|a\|_{L^2(\Omega)} t^{-\frac{\alpha_1}{2}},\ 0<t\leq \delta<T,
 \nonumber
 \end{align}
where $\bar{\alpha}=\min(\alpha_1-\alpha_1 \gamma, \alpha_1-\alpha_2)$, 
and the constant $C>0$ is independent of $\delta$.

We set $U(t)=\|A^{\frac{1}{2}}u(t)\|_{L^2(\Omega)}$ in $[0,\delta]$, and $U=0$ in the interval $(\delta,T]$. 
It is easy to see that $U(t)$ also satisfies the above inequality. Therefore by the general Gronwall's inequality ( e.g. Theorem 7.1.1 in \cite{Henry}), we have
\begin{align}
 U(t) \leq C_T t^{-\frac{\alpha_1}{2}} \|a\|_{L^2(\Omega)},
 \ t\in[0, T].\nonumber
\end{align}
Therefore
\begin{align} \label{Agamma Estimate on small T}
 \|u(t)\|_{D(A^{\frac{1}{2}})} \leq C_T t^{-\frac{\alpha_1}{2}} \|a\|_{L^2(\Omega)},
 \ t\in(0, \delta].
\end{align}
Hence by the integral equation of $u$ and the above inequality, similarly to Theorem \ref{Estimate}, 
we have
\begin{align}
   \|u(t)\|_{L^2(\Omega)}
\leq C_T(t^{\frac{\alpha_1}{2}} + 1)\|a\|_{L^2(\Omega)}
    +C_T\int_0^t \sum_{i=2}^{\ell}(t-\tau)^{\alpha_1-\alpha_i-1}\|u(\tau)\|_{L^2(\Omega)} d\tau,\ t\in[0,\delta]. \nonumber
\end{align}

Again from the general Gronwall's inequality, and similarly to (\ref{Agamma Estimate on small T}), 
we can prove that
\begin{align} \label{L^2 Estimate on small T}
  \|u(t)\|_{L^2(\Omega)} \leq C_T \|a\|_{L^2(\Omega)},\ t\in[0, \delta],
\end{align}
where $C_T>0$ depending only on $T$, $\{q_i\}_{i=2}^{\ell}$, $d$, $\{\alpha_j\}_{j=1}^{\ell}$,
$B$, $\Omega$ and the coefficients of the operator $\mathcal{A}$.

Next we study our initial-boundary problem on the interval $[\frac{\delta}{2},\delta]$. Denoting
$t_0:=\frac{\delta}{2}$, then $[\frac{\delta}{2},\delta]$ can be rewritten as
$[t_0,2t_0]$.

It is easy to see that the representation of the solution
\[
  u(t) = \sum_{i=2}^{\ell}\int_0^t A^{-1}S'(t-\tau) q_i(x)\partial_t^{\alpha_i}u(\tau)d\tau
        -\int_0^t A^{-1}S'(t-\tau) B(x)\cdot \nabla u(\tau)d\tau
        +S(t)a.
\]
still holds on interval $[t_0,2t_0]$. For $t\in [t_0,2t_0]$, we break up the integral 
into two parts
\begin{align}
  u(t)
=& \sum_{i=2}^{\ell}\int_{t_0}^t A^{-1}   S'(t-\tau) q_i(x)\partial_t^{\alpha_i} u(\tau)  d\tau
  -\int_{t_0}^t A^{-1}   S'(t-\tau) B(x)\cdot \nabla u(\tau)           d\tau
   \nonumber\\
 &+\sum_{i=2}^{\ell}\int_0^{t_0} A^{-1}   S'(t-\tau) q_i(x)\partial_t^{\alpha_i} u(\tau)  d\tau
  -\int_0^{t_0} A^{-1}   S'(t-\tau) B(x)\cdot \nabla u(\tau)           d\tau
          +S(t)a. \nonumber
\end{align}

We define a new operator $\mathcal{E}$ on interval $[t_0, 3t_0]$ by
\begin{align}
  \mathcal{E}v(t)
=& \sum_{i=2}^{\ell}\int_{t_0}^t A^{-1}   S'(t-\tau) q_i(x)\partial_t^{\alpha_i} v(\tau)  d\tau
  -\int_{t_0}^t A^{-1}   S'(t-\tau) B(x)\cdot \nabla v(\tau)           d\tau
   \nonumber\\
 &+\sum_{i=2}^{\ell}\int_0^{t_0} A^{-1}   S'(t-\tau) q_i(x)\partial_t^{\alpha_i} u(\tau)  d\tau
  -\int_0^{t_0} A^{-1}   S'(t-\tau) B(x)\cdot \nabla u(\tau)           d\tau
          +S(t)a \nonumber\\
=& \sum_{i=2}^{\ell}\int_{t_0}^t A^{-1}   S'(t-\tau) q_i(x)\partial_t^{\alpha_i} v(\tau)  d\tau
  -\int_{t_0}^t A^{-1}   S'(t-\tau) B(x)\cdot \nabla v(\tau)           d\tau
  +I(t).\nonumber
\end{align}

Now let us turn to the evaluation of $I(t)$, $t\in[t_0,3t_0]$. 
The use of (\ref{Agamma Estimate on small T}) and (\ref{L^2 Estimate on small T}) leads to
\begin{align}
  \|I(t)\|_{L^2(\Omega)}
\leq C_T
\left[\int_0^{t_0} (t-\tau)^{\alpha_1-1} \tau^{-\frac{\alpha_1}{2}} d\tau
     +\int_0^{t_0} \sum_{i=2}^{\ell}(t-\tau)^{\alpha_1-\alpha_i-1} d\tau
\right] \|a\|_{L^2(\Omega)}
    +C_T\|a\|_{L^2(\Omega)}. \nonumber
\end{align}
Therefore $\|I(t)\|_{L^2(\Omega)} \leq C_T\|a\|_{L^2(\Omega)}$, $t\in[t_0,3t_0]$. By an argument similar to the proof of Lemma \ref{Ku L2 <} and Lemma \ref{Ku L1tilde <}, 
we obtain that the operator $\mathcal{E}$ maps 
$X_1 :=C([t_0, 3t_0]; L^2(\Omega)) \cap L^{p}(t_0,3t_0;H_0^1(\Omega))$ 
into itself, where $p\in(\frac{1}{\alpha_1},\frac{2}{\alpha_1})$.

Let $v_1, v_2\in X_1$. By the definition of the operator $\mathcal{E}$, similarly to Theorem 
\ref{Fixed point small T}, we have
\begin{align*}
    &\|\mathcal{E}v_1(t) - \mathcal{E}v_2(t)\|_{L^2(\Omega)} \\
\leq&\ C\int_{t_0}^t (t-\tau)^{\alpha_1-1} \|v_1(\tau)-v_2(\tau)\|_{D(A^{\frac{1}{2}})}  d\tau
   + C\int_{t_0}^t \sum_{i=2}^{\ell}(t-\tau)^{\alpha_1-\alpha_i-1} d\tau \|v_1-v_2\|_{C([t_0,3t_0];L^2(\Omega))}.
\end{align*}
Consequently, the use of H\"{o}lder's inequality and $D(A^{\frac{1}{2}})=H_0^1(\Omega)$ yields that, for any
$t\in[t_0,3t_0]$, the following estimate
\begin{align*}
\|\mathcal{E}v_1(t) - \mathcal{E}v_2(t)\|_{L^2(\Omega)} 
\leq C\left((2t_0)^{\alpha_1-\frac{1}{p}}+ \sum_{i=2}^{\ell}(2t_0)^{\alpha_1-\alpha_i}\right)
      \|v_1-v_2\|_{X_1},\ p\in(\tfrac{1}{\alpha_1},\tfrac{2}{\alpha_1}).
\end{align*}
Moreover, similarly to the argument in Theorem \ref{Fixed point small T}, we can prove that
\begin{align*}
    &\|\mathcal{E}v_1(t) - \mathcal{E}v_2(t)\|_{D(A^{\frac{1}{2}})} \\
\leq&\ C\int_{t_0}^t (t-\tau)^{\frac{\alpha_1}{2}-1} \|v_1(\tau)-v_2(\tau)\|_{D(A^{\frac{1}{2}})}  d\tau
    +C\int_{t_0}^t \sum_{i=2}^{\ell}(t-\tau)^{\alpha_1-\alpha_i-1}  \|v_1(\tau)-v_2(\tau)\|_{D(A^{\frac{1}{2}})}  d\tau.
\end{align*}
Then by Young's inequality, for $p\in(\frac{1}{\alpha_1},\frac{2}{\alpha_1})$ we have
\begin{align*}
 \int_{t_0}^{3t_0} \|\mathcal{E}v_1(t) - \mathcal{E}v_2(t)\|_{D(A^{\frac{1}{2}})}^{p} dt
\leq C((2t_0)^{\frac{\alpha_1}{2}p}+\sum_{i=2}^{\ell}(2t_0)^{(\alpha_1-\alpha_i)p}) 
\|v_1-v_2\|_{L^{p}(t_0,3t_0;H_0^1(\Omega))}^p.
\end{align*}
By the notation $t_0=\frac{\delta}{2}$, we see that
\begin{align} 
    \|\mathcal{E}v_1(t) - \mathcal{E}v_2(t)\|_{L^2(\Omega)} 
\leq C\left(\delta^{\alpha_1-\frac{1}{p}}+ \sum_{i=2}^{\ell}\delta^{\alpha_1-\alpha_i}\right)
      \|v_1-v_2\|_{X_1},\ t\in[t_0,3t_0]
\label{Ev1-Ev2 L2 3t_0}\\
  \|\mathcal{E}v_1 - \mathcal{E}v_2\|_{L^p(t_0,3t_0;H_0^1(\Omega))}
\leq C\left(\sum_{i=2}^{\ell}\delta^{\alpha_1-\alpha_i}+\delta^{\frac{\alpha_1}{2}}\right)
      \|v_1-v_2\|_{L^{p}(t_0,3t_0;H_0^1(\Omega))}.\label{Ev1-Ev2 Agamma 3t_0}
\end{align}
From that the constant $C$ in (\ref{Ev1-Ev2 L2 3t_0}) and (\ref{Ev1-Ev2 Agamma 3t_0}) are same to the
constant in (\ref{constant C1}) and (\ref{constant C2}) and the choice of $\delta$ in 
Theorem \ref{Fixed point small T}, we can deduce that the operator $\mathcal{E}$ is strictly contracted 
operator from $(X_1,\|\cdot\|)$ into itself, where the norm $\|\cdot\|$ defined by
 \[
   \|u\| := \|u\|_{L^{p}(t_0,3t_0;H_0^1(\Omega))} + \|u\|_{C([t_0,3t_0];L^2(\Omega))}.
 \]
Therefore, there exists a unique fixed point $v\in X_1$ such that $\mathcal{E}v(t)=v(t)$ in $[t_0,3t_0]$,
that is,
 \begin{align} 
  v(t) = \sum_{i=2}^{\ell}\int_{t_0}^t A^{-1}S'(t-\tau)q_i(x)\partial_t^{\alpha_i} v(\tau)d\tau
        -\int_{t_0}^t A^{-1}   S'(t-\tau) B(x)\cdot \nabla v(\tau)           d\tau
        +I(t),\ t_0\leq t\leq3t_0.\nonumber
 \end{align}
Additionally, from the uniqueness argument we can see that $u(t)=v(t)$ in 
$[t_0, 2t_0]=[\frac{\delta}{2}, \delta]$, then we define a new function $\tilde{u}$ by
\begin{align}
  \tilde{u}=
\left\{
   \begin{array}{ll}
     u, & \hbox{$t\in[0,2t_0]$;} \\
     v, & \hbox{$t\in[2t_0, 3t_0]$.}
   \end{array}\right.\nonumber
\end{align}
Repeating the above argument to the interval pair $\big([0,3t_0], [2t_0,4t_0]\big)$, we can obtain that the mild solution exists on the larger interval $[0,4t_0]$, and go on. Finally the existence interval of the mild solution to our problem (\ref{doublefrac}) can be extended to the interval $[0,T]$, where $T>0$ is constant chosen at the very beginning.
\end{proof}
The discussion for the non-homogeneous equation with initial condition being zero is similar to the homogeneous equation. We list the results of homogeneous and non-homogeneous equations in the following theorem.
\begin{theorem} \label{regularity results}
Let $\{\alpha_j\}_{j=1}^{\ell}$ satisfy $0<\alpha_{\ell}<\cdots<\alpha_1<1$, 
and $\gamma\in[\frac{1}{2},1)$.
\begin{enumerate}
\item Let $a\in L^2(\Omega)$, $F=0$, then the initial-boundary value problem (\ref{doublefrac}) admits a unique mild
solution $u\in C((0,T];D(A^{\gamma}))\cap C([0,T];L^2(\Omega))$. 
Moreover, there exists a constant $C>0$ such that 
\[
\|u\|_{C([0,T];L^2(\Omega))} 
\leq C_T \|a\|_{L^2(\Omega)},
\]
and
\[
\|u(t)\|_{D(A^{\gamma})} \leq C_Tt^{-\alpha_1\gamma}\|a\|_{L^2(\Omega)},\ 0<t\leq T.
\]
\item Let $F\in L^{\infty}(0,T;L^2(\Omega))$, $a=0$, then the initial-boundary problem (\ref{doublefrac}) admits a unique solution $u\in C([0,T];D(A^{\gamma}))\cap L^2(0,T;H^2(\Omega))$. 
Moreover the following estimate holds:
\[
\|u\|_{C([0,T];D(A^{\gamma}))} + \|u\|_{L^2(0,T;H^2(\Omega))}
\leq C_T\|F\|_{L^{\infty}(0,T;L^2(\Omega))}.
\]
\end{enumerate}
\end{theorem}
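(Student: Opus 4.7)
The plan is to derive both statements by combining the existence result of Theorem~\ref{Fixed point any T} with the a priori estimate of Theorem~\ref{Estimate}, together with a dominated-convergence continuity argument and a Gronwall-based uniqueness argument. The two parts are essentially symmetric: Part~1 takes $a\in L^2(\Omega)$ with $F=0$, and Part~2 takes $a=0$ with $F\in L^\infty(0,T;L^2(\Omega))$.

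For Part~1 existence, Theorem~\ref{Fixed point any T} directly supplies a mild solution $u\in L^p(0,T;H_0^1(\Omega))\cap C([0,T];L^2(\Omega))$; plugging $u$ into the integral identity (\ref{solution aF}) and invoking Theorem~\ref{Estimate} with $\gamma\in[\tfrac{1}{2},1)$ yields the pointwise bound $\|u(t)\|_{D(A^\gamma)}\le C_T t^{-\alpha_1\gamma}\|a\|_{L^2(\Omega)}$. For Part~2 I would re-run the contraction argument of Theorems~\ref{Fixed point small T}--\ref{Fixed point any T} on $\mathcal{K}$ with $I_5=I_4=0$ but keeping the source term $I_1$: the bounds of Lemmas~\ref{Ku L2 <}--\ref{Ku L1tilde <} treat $\|F\|_{L^\infty(0,T;L^2(\Omega))}$ in exactly the same convolution--$L^{p'}$ fashion as the $B\cdot\nabla u$ factor, so the contraction constants in (\ref{constant C1})--(\ref{constant C2}) are unaffected and a mild solution is produced in the same space. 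Applying Theorem~\ref{Estimate} with $a=0$ then gives $\|u(t)\|_{D(A^\gamma)}\le C_T\|F\|_{L^\infty(0,T;L^2(\Omega))}$ uniformly for every $\gamma\in[\tfrac{1}{2},1)$.

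Continuity $u\in C((0,T];D(A^\gamma))$ in Part~1, and $C([0,T];D(A^\gamma))$ in Part~2, will be obtained by subtracting (\ref{solution aF}) at two times $t$ and $t_0$ and passing to the limit by dominated convergence: the kernels $A^{\gamma-1}S'(\cdot)$ and $A^{\gamma-1}S''(\cdot)$ are continuous off the diagonal and are dominated by the integrable bounds (\ref{Ar-1s'})--(\ref{Ar-1s''}) on any compact neighbourhood of $t_0$, while strong continuity of $S(\cdot)a$ follows termwise from the series (\ref{S(t)}). For Part~2 the continuity at $t_0=0$ is immediate since $a=0$ forces $S(t)a\equiv 0$ and every convolution in (\ref{solution aF}) vanishes as $t\to 0^+$ by the integrable-kernel bounds. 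Uniqueness in both parts follows from Theorem~\ref{Estimate} applied to $w=u_1-u_2$: this difference satisfies (\ref{solution aF}) with $a=0$ and $F=0$, so the Gronwall step at the end of the proof of Theorem~\ref{Estimate} forces $\|A^\gamma w(t)\|\equiv 0$.

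The main obstacle is the $L^2(0,T;H^2(\Omega))$ bound in Part~2, since the pointwise $D(A^\gamma)$ estimate diverges as $\gamma\uparrow 1$. My plan here is to abandon the dual-norm route and expand $u$ in the eigenbasis $\{\phi_n\}$. Dropping the $q_j$ and $B\cdot\nabla$ perturbations (to be absorbed at the end by Gronwall), the leading contribution reduces to
\[
Au(t)\;\sim\;-\sum_{n=1}^\infty \lambda_n\,\phi_n\int_0^t (t-\tau)^{\alpha_1-1}E_{\alpha_1,\alpha_1}\bigl(-\lambda_n(t-\tau)^{\alpha_1}\bigr)(F(\tau),\phi_n)\,d\tau,
\]
and the standard fact that $\|\lambda_n s^{\alpha_1-1}E_{\alpha_1,\alpha_1}(-\lambda_n s^{\alpha_1})\|_{L^1(0,T)}$ is bounded uniformly in $n$ (cf.~\cite{SakYam}, using the $s^{-2}$ decay of $E_{\alpha_1,\alpha_1}(-s)$) lets Young's convolution inequality deliver $\sum_n\lambda_n^2\int_0^T|(u(t),\phi_n)|^2\,dt\le C_T\|F\|_{L^\infty(0,T;L^2(\Omega))}^2$, which is precisely the $L^2(0,T;H^2(\Omega))$ bound via Parseval. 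The $q_j\partial_t^{\alpha_j}u$ and $B\cdot\nabla u$ terms, being lower order, are absorbed by one further Gronwall step using the already-established pointwise $D(A^\gamma)$ bound with $\gamma$ close to $1$.
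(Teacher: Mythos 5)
There is a genuine gap in the regularity-lifting step, and it appears in both parts. Your plan is to take the mild solution produced by the fixed-point argument, which lives only in $L^p(0,T;H_0^1(\Omega))\cap C([0,T];L^2(\Omega))$, i.e.\ with $D(A^{1/2})$ spatial regularity, and then ``invoke Theorem \ref{Estimate}'' to conclude $u(t)\in D(A^{\gamma})$ with the bound $C_Tt^{-\alpha_1\gamma}\|a\|_{L^2(\Omega)}$ for every $\gamma\in[\frac12,1)$. But Theorem \ref{Estimate} is an \emph{a priori} estimate: its hypothesis is precisely that $u(t)\in D(A^{\gamma})$, so it cannot deliver that membership. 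Nor can you get it in a single pass through the integral equation: for the multi-term contributions $I_2$, $I_3$ the best you can do with the known $\|A^{1/2}u(\tau)\|\leq C\tau^{-\alpha_1/2}\|a\|$ is a kernel of order $(t-\tau)^{\alpha_1-1-\alpha_1(\gamma-\frac12)-\alpha_i}$, which is integrable only when $\alpha_1(\frac32-\gamma)>\alpha_i$; for $\gamma$ close to $1$ this requires $\alpha_1/2>\alpha_2$, which is not assumed. The paper closes exactly this gap with an iterative bootstrap: it first proves the $D(A^{1/2})$ bound by Gronwall, then applies $A^{\frac12+\varepsilon}$ to the integral equation with $\varepsilon$ small enough that $\alpha_1-\alpha_1\varepsilon-\alpha_i>0$, gaining regularity in increments $\frac12\to\frac12+\varepsilon\to\frac12+2\varepsilon\to\cdots$ until any $\gamma<1$ is reached. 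Your proposal is missing this idea, and the same issue undermines the claimed $C([0,T];D(A^{\gamma}))$ bound in Part 2 (the paper sidesteps it there by running a different contraction, on iterated powers $\mathcal{K}^n$ with the Beta-function estimate, directly in the space $X_2=\{u\in C([0,T];D(A^{\gamma}));u(0)=0\}$).

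The rest is broadly aligned with the paper. Your eigenfunction-expansion/Young's-inequality derivation of the $L^2(0,T;H^2(\Omega))$ bound is exactly the content of Theorem 2.1 of \cite{SakYam}, which the paper simply cites for the term $I_1=\int_0^tA^{-1}S'(t-\tau)(B\cdot\nabla u+F)\,d\tau$ after noting $\|B\cdot\nabla u\|_{L^2(\Omega)}\leq C\|F\|_{L^{\infty}(0,T;L^2(\Omega))}$. However, your plan to ``absorb'' the $q_j\partial_t^{\alpha_j}u$ terms ``by one further Gronwall step'' does not quite work as stated: a Gronwall inequality for $\|Au(t)\|$ would need the full $H^2$ norm inside the convolution, which is what you are trying to establish. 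The paper instead bounds $A(I_2+I_3)(t)$ \emph{directly} in $C([0,T];L^2(\Omega))$ by writing $A=A^{\gamma+\varepsilon_0}$ with $\gamma+\varepsilon_0=1$ and $\varepsilon_0$ small enough that the kernel exponent $\alpha_1-\alpha_i-\alpha_1\varepsilon_0-1$ is integrable, using only the already-established $C([0,T];D(A^{\gamma}))$ bound; no Gronwall is needed for this step. You should replace the absorption argument by this direct estimate.
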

\begin{proof}
(1). From Theorem \ref{Fixed point any T}, we see that there exists a unique mild solution in 
$L^{p}(0,T;H_0^1(\Omega)) \cap C([0,T];L^2(\Omega))$, $p\in(\frac{1}{\alpha_1},\frac{2}{\alpha_1})$, 
such that 
\[
u(t)=\sum_{i=2}^{\ell}\int_0^t A^{-1}S'(t-\tau)q_i(x)\partial_{\tau}^{\alpha_i}u(\tau)d\tau
 -\int_0^t A^{-1}S'(t-\tau)B(x)\cdot\nabla u(\tau)d\tau
+S(t)a.
\]
Therefore, by a similarly argument to Theorem \ref{Estimate}, we can prove that
\begin{align}
 \|A^{\frac{1}{2}}u(t)\|_{L^2(\Omega)}
\le C_T\int_0^t \left({t-\tau}\right)^{\bar{\alpha} -1}
       \|A^{\frac{1}{2}}u(\tau)\|_{L^2(\Omega)}d\tau
     +C_T\|a\|_{L^2(\Omega)} t^{-\frac{\alpha_1}{2}}, 0<t\leq T,
 \nonumber
 \end{align}
where $\bar{\alpha}=\alpha_1-\max(\alpha_1 \gamma, \alpha_2)$. By general Gronwall's inequality, 
the following estimate holds:
\begin{align} \label{A1/2u < a}
\|A^{\frac{1}{2}}u(t)\|_{L^2(\Omega)} \leq C_Tt^{-\frac{\alpha_1}{2}} \|a\|_{L^2(\Omega)}, 0<t\leq T.
\end{align}
By the density argument, we deduce that $u\in C((0,T];H_0^1(\Omega))$. For any arbitrary fixed small 
$0<\varepsilon<\frac{1}{2}$, taking $A^{\frac{1}{2}+\varepsilon}$ on the both sides of the above integral
equation of $u$, by (\ref{Ar-1s'}) and (\ref{Ar-1s''}) and (\ref{A1/2u < a}), similarly to the proof of Theorem \ref{Estimate}, we derive that
\begin{align}
&\|A^{\frac{1}{2}+\varepsilon} u(t)\|_{L^2(\Omega)}
\leq C\int_0^t (t-\tau)^{\frac{\alpha_1}{2}-\alpha_1\varepsilon-1}\|A^{\frac{1}{2}}u\|_{L^2(\Omega)}d\tau
+C\sum_{i=2}^{\ell}(t^{\frac{\alpha_1}{2}-\alpha_1\varepsilon-\alpha_i}+t^{-\alpha_1(\frac{1}{2}+\varepsilon)})
\|a\|_{L^2(\Omega)}\nonumber\\
&\qquad\qquad\qquad+C\int_0^t \sum_{i=2}^{\ell}(t-\tau)^{\alpha_1-\alpha_1\varepsilon-\alpha_i}
                              \|A^{\frac{1}{2}}u\|_{L^2(\Omega)}d\tau\nonumber\\
\leq&\ C_T\int_0^t (t-\tau)^{\frac{\alpha_1}{2}-\alpha_1\varepsilon-1}\tau^{-\frac{\alpha_1}{2}}\|a\|_{L^2(\Omega)}d\tau
+C_T(\sum_{i=2}^{\ell}t^{\frac{\alpha_1}{2}-\alpha_1\varepsilon-\alpha_i}
     +t^{-\frac{\alpha_1}{2}})\|a\|_{L^2(\Omega)}\nonumber\\
&+C_T\int_0^t \sum_{i=2}^{\ell}(t-\tau)^{\alpha_1-\alpha_1\varepsilon-\alpha_i}\tau^{-\frac{\alpha_1}{2}}
\|a\|_{L^2(\Omega)}d\tau.\nonumber\\
\leq&\ C_Tt^{-\alpha_1\varepsilon}\|a\|_{L^2(\Omega)}
+C_T(\sum_{i=2}^{\ell}t^{\frac{\alpha_1}{2}-\alpha_1\varepsilon-\alpha_i}
     +t^{-\alpha_1(\frac{1}{2}+\varepsilon})\|a\|_{L^2(\Omega)}
+C_T\sum_{i=2}^{\ell}t^{1+\frac{\alpha_1}{2}-\alpha_1\varepsilon-\alpha_i}
\|a\|_{L^2(\Omega)},\nonumber
\end{align}
which yields that $u\in C((0,T];D(A^{\frac{1}{2}+\varepsilon}))$ and 
\begin{align*}
\|A^{\frac{1}{2}+\varepsilon} u(t)\|_{L^2(\Omega)}
\leq C_Tt^{-\widetilde{\alpha}}\|a\|_{L^2(\Omega)},\ 0<\widetilde{\alpha}<1,\ 0< t\leq T.
\end{align*}
Repeating the above argument, we deduce $u\in C((0,T];D(A^{\frac{1}{2}+2\varepsilon}))$ and 
\begin{align*}
\|A^{\frac{1}{2}+2\varepsilon} u(t)\|_{L^2(\Omega)}
\leq C_Tt^{-\tilde{\alpha}_1}\|a\|_{L^2(\Omega)},\ 0<\widetilde{\alpha}_1<1,\ 0< t\leq T.
\end{align*}
Consequently, step by step, we obtain that for any $\gamma\in[\frac{1}{2},1)$ there exists a constant $C>0$ depending only on 
$d$, $\{q_i\}_{i=2}^{\ell}$, $\{\alpha_j\}_{j=1}^{\ell}$, $\gamma$, $B$, $\Omega$, $T$ 
and the coefficients of the operator $\mathcal{A}$, such that $u\in C((0,T];D(A^{\gamma}))$ and
$
\|A^{\gamma} u(t)\|_{L^2(\Omega)}
\leq Ct^{-\alpha_1\gamma}\|a\|_{L^2(\Omega)}
$,
$t\in(0,T]$.

(2). Firstly, we show the existence of the mild solution of initial-boundary value problem
(\ref{doublefrac}). It is easy to show that the operator $\mathcal{K}$ defined by (\ref{solution aF}) maps
the space $X_2$ into itself, where \[X_2:=\{u\in C([0,T];D(A^{\gamma}));u(0)=0\}\] with the norm 
$\|\cdot\|_{C([0,T];D(A^{\gamma}))}$. Moreover, by induction, we can obtain that for any $u$, $v$ in $X_2$,
the following estimation 
\[
\|\mathcal{K}^nu(t)-\mathcal{K}^nv(t)\|_{D(A^{\gamma})}
\leq \frac{M_1^nt^{\bar{\alpha}n}}{\Gamma(\bar{\alpha}n+1)}\|u-v\|_{C([0,T];D(A^{\gamma}))},\ 0< t\leq T
\]
holds, where $\bar{\alpha}=\alpha_1-\max(\alpha_1\gamma,\alpha_2)$. In fact, by the definition of the operator $\mathcal{K}$ and the induct assumption, we have
\begin{align*}
&\|\mathcal{K}^{n+1}u(t)-\mathcal{K}^{n+1}v(t)\|_{D(A^{\gamma})} 
\leq C\int_0^t (t-\tau)^{\bar{\alpha}-1} 
 \frac{M_1^n{\tau}^{\bar{\alpha}n}}{\Gamma(\bar{\alpha}n+1)}\|u-v\|_{C([0,T];D(A^{\gamma}))}d\tau,
 \ 0\leq t\leq T.
\end{align*}
Using $B(\alpha,\beta):=\frac{\Gamma(\alpha)\Gamma(\beta)}{\Gamma(\alpha+\beta)}
=\int_0^1 s^{\alpha-1}(1-s)^{\beta-1}ds$, $\alpha,\beta>0$, setting $M_1:=C\Gamma(\bar{\alpha})$, 
we see that
\begin{align*}
\|\mathcal{K}^{n+1}u(t)-\mathcal{K}^{n+1}v(t)\|_{D(A^{\gamma})}
\leq&\ \frac{CM_1^n}{\Gamma(\bar{\alpha}n+1)}B(\bar{\alpha},\bar{\alpha}n+1) t^{\bar{\alpha}(n+1)}
\|u-v\|_{C([0,T];D(A^{\gamma}))} \\
=&\ \frac{M_1^{n+1}t^{\bar{\alpha}n}}{\Gamma(\bar{\alpha}{n+1}+1)}\|u-v\|_{C([0,T];D(A^{\gamma}))},\ 0\leq t\leq T.
\end{align*}
Consequently, for $n\in\mathbb{N}$ big enough, we can see that $\mathcal{K}$ is a strictly contracted operator from $X_2$ to $X_2$. Then there exists a unique fixed point $\bar{u}\in X_2$ such that $\mathcal{K}^n\bar{u}=\bar{u}$.
It is easy to show that $\bar{u}$ is also the fixed point of the operator $\mathcal{K}$, that is, 
$\mathcal{K}\bar{u}=\bar{u}$, and therefore the fixed point of $\mathcal{K}:X_2\rightarrow X_2$ is also unique. 

Similar to the proof of Theorem \ref{Estimate}, we can prove 
   $u\in C([0,T];D(A^{\gamma}))$ and the estimation 
\begin{align} \label{u C[0,T];D(Agamma) leq F}
\|u\|_{C([0,T];D(A^{\gamma}))}\leq C\|F\|_{L^{\infty}(0,T;L^2(\Omega))}.   
\end{align}
The use of $\|\nabla u\|_{L^2(\Omega)}\leq C\|F\|_{L^{\infty}(0,T); L^2(\Omega)}$ and Theorem 2.1 in \cite{SakYam} leads to
 \[
 I_1(t):=\int_0^t A^{-1} S'(t-\tau) (B\cdot\nabla u + F)d\tau \in L^2(0,T;H^2(\Omega)),
 \]
 and
 \begin{align}\label{v_1}
\|I_1\|_{L^2(0,T;H^2(\Omega))} \leq C\|F\|_{L^{\infty}(0,T; L^2(\Omega))}.
 \end{align}
Next we show that
$\sum_{i=2}^{\ell}\int_0^t A^{-1} S'(t-\tau) q_i\partial_{\tau}^{\alpha_i}u d\tau:=I_2(t)+I_3(t) \in
C([0,T];H^2(\Omega))$, where $I_2(t)$, $I_3(t)$ are defined in (\ref{solution aF}).

For any $\gamma\in[\frac{1}{2},1)$, $\varepsilon_0>0$ small enough such that 
$\alpha_1-\alpha_i-\alpha_1\varepsilon_0 > 0$ $(i=2,\cdots,\ell)$, from (\ref{u C[0,T];D(Agamma) leq F}), 
similarly to Theorem \ref{Estimate}, we see that
\begin{align*}
&\|A^{\gamma+\varepsilon_0}(I_2+I_3)(t)\|_{L^2(\Omega)}
\leq C\int_0^t \sum_{i=2}^{\ell}(t-\tau)^{\alpha_1-\alpha_i-\alpha_1\varepsilon_0-1}\|A^{\gamma}u\|_{L^2(\Omega)} d\tau \\
\leq&\ C\int_0^t \sum_{i=2}^{\ell}(t-\tau)^{\alpha_1-\alpha_i-\alpha_1\varepsilon_0-1} d\tau 
\|F\|_{L^{\infty}(0,T;L^2(\Omega))}
\leq C \|F\|_{L^{\infty}(0,T;L^2(\Omega))}.
\end{align*}
 Since $\gamma\in[\frac{1}{2},1)$, we can choose $\gamma$ such that $\gamma+\varepsilon_0=1$, then we have
 \begin{align}\label{v_2}
 \|A(I_2+I_3)(t)\|_{L^2(\Omega)}
\leq C \|F\|_{L^{\infty}(0,T;L^2(\Omega))},\ 0\leq t\leq T.
 \end{align}
Hence $I_2+I_3\in C([0,T];H^2(\Omega))$. Collecting the above estimates (\ref{v_1}) and (\ref{v_2}), we derive $u\in L^2(0,T;H^2(\Omega))$, 
 and 
 \[\|u\|_{L^2(0,T;H^2(\Omega))} \leq C\|F\|_{L^{\infty}(0,T; L^2(\Omega))}.\]
\end{proof}


\section{H\"{o}lder regularity}

Here we assume that $a=0$, $F\in C^{\theta}([0,T];L^2(\Omega))$ with $\theta\in(0,1)$ and $F(0)=0$. 
We expect to obtain some H\"{o}lder estimate
for the mild solution to the equation (\ref{doublefrac}).

We set $X_{\theta}:=\{Au\in C^{\theta}([0,T];L^2(\Omega)), u(0)=0\}$, with H\"{o}lder norm 
\[
\|Au\|_{\theta}:=\|Au\|_{C^0([0,T];L^2(\Omega))}
 + \sup_{t_1\neq t_2\in [0,T]}\frac{\|Au(t_1)-Au(t_2)\|_{L^2(\Omega)}}{|t_1-t_2|^{\theta}}.
\]

From the arguments in the above section, we can formally obtain that $u(t)=\mathcal{K}u(t)$, $t\in[0,T]$,
where the operator $\mathcal{K}$ is defined by (\ref{Ku}). Firstly, we want to prove that the operator $\mathcal{K}$ can improve the H\"{o}lder's regularity. Namely, the following lemma holds.

\begin{lemma}\label{Ku(t+h)-Ku(t)}
There exist constant $\varepsilon>0$ small enough, and constant $C>0$, for any $\theta'\in[0,\theta)$, 
the following estimate
\[
\|\mathcal{K}u(t+h)-\mathcal{K}u(t)\|_{L^2(\Omega)} 
\leq C(h^{\theta}\|F\|_{\theta}+h^{\theta'+\varepsilon}\|Au\|_{\theta'})
\]
 holds any $u\in X_{\theta'}$.
\end{lemma}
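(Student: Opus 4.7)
Because $a=0$, the definition $\mathcal{K}u=\sum_{k=1}^5 I_k$ reduces to $\mathcal{K}u(t)=I_1(t)+I_2(t)+I_3(t)$, so I would estimate $\mathcal{K}u(t+h)-\mathcal{K}u(t)$ term by term. For each piece the basic decomposition is
\[
I_k(t+h) - I_k(t)
= \int_t^{t+h}\!K_k(t+h-\tau)g_k(\tau)\,d\tau
+ \int_0^t \bigl[K_k(t+h-\tau)-K_k(t-\tau)\bigr]g_k(\tau)\,d\tau,
\]
hereafter $N_k + M_k$, where $K_k$ is the operator-valued kernel in $I_k$ and $g_k\in\{F,\,B\cdot\nabla u,\,q_iu\}$. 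From $u\in X_{\theta'}$, boundedness of $A^{-1}$ on $L^2(\Omega)$, and $D(A^{1/2})\hookrightarrow H_0^1(\Omega)$, one obtains the data estimates
\[
\|g_k(\tau)\|_{L^2}\le C\|Au\|_{\theta'}\tau^{\theta'},\qquad
\|g_k(t_1)-g_k(t_2)\|_{L^2}\le C\|Au\|_{\theta'}|t_1-t_2|^{\theta'},
\]
together with the analogous bounds for $F$ using $F(0)=0$ and $\|F\|_\theta$.

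\textbf{The $F$-term.} Combining $\|A^{-1}S'(s)\|\le Cs^{\alpha_1-1}$ from (\ref{Ar-1s'}) with $\|F(\tau)\|_{L^2}\le\|F\|_\theta\tau^\theta$, a Beta-function change of variable $\tau=t+hr$ yields $\|N_1^F\|_{L^2}\le Ch^{\theta+\alpha_1}\|F\|_\theta$. The Hölder bound $\|F(t+h-\tau)-F(t-\tau)\|_{L^2}\le\|F\|_\theta h^\theta$ together with $\int_0^t s^{\alpha_1-1}\,ds\le C_T$ gives $\|M_1^F\|_{L^2}\le Ch^\theta\|F\|_\theta$. Summed, these reproduce the $h^\theta\|F\|_\theta$ part of the claim.

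\textbf{The $u$-terms.} After the $\tilde\eta=\eta/(t-\tau)$ reduction of the double integral in $I_3$ used in Theorem \ref{Estimate}, every $u$-dependent piece has the generic form $\int_0^t K_\beta(t-\tau)g(\tau)\,d\tau$ with $\|K_\beta(s)\|\le Cs^{\alpha_1-\beta-1}$ and $\beta\in\{0\}\cup\{\alpha_i\}_{i=2}^{\ell}$. The piece $N$ is estimated directly by $Ch^{\alpha_1-\beta}\|Au\|_{\theta'}$ using the kernel bound and $\|g(\tau)\|\le CT^{\theta'}\|Au\|_{\theta'}$. For $M$ one splits $[0,t]$ at $\tau=t-h$: on $[t-h,t]$ the Hölder bound on $g$ and $\int_{t-h}^t s^{\alpha_1-\beta-1}\,ds\le Ch^{\alpha_1-\beta}$ give the order $h^{\theta'+\alpha_1-\beta}$; on $[0,t-h]$ one uses
\[
K_\beta(t+h-\tau)-K_\beta(t-\tau)=\int_0^h K_\beta'(t+r-\tau)\,dr
\]
together with (\ref{Ar-1s''}) and the derivative of $s^{-\beta}$ to bound the kernel difference by $Ch(t-\tau)^{\alpha_1-\beta-2}$; then one decomposes $g(\tau)=[g(\tau)-g(t)]+g(t)$ and splits the remaining integrals at $\tau=t/2$ to tame the non-integrable singularity near $\tau=t$. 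Choosing $\varepsilon>0$ so that $\theta'+\varepsilon\le\min_{2\le i\le\ell}(\alpha_1-\alpha_i)$ uniformly in $\theta'\in[0,\theta)$ (which is compatible with $\varepsilon$ being a fixed constant provided $\theta<\min_i(\alpha_1-\alpha_i)$), the $u$-contributions sum to at most $Ch^{\theta'+\varepsilon}\|Au\|_{\theta'}$.

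\textbf{Main obstacle.} The technically delicate step is the $[0,t-h]$ portion of $M$: the pointwise kernel-difference bound $Ch(t-\tau)^{\alpha_1-\beta-2}$ is non-integrable near $\tau=t$, so a naive estimate fails to produce the gain $\varepsilon$. The subtraction $g(\tau)-g(t)$ supplies an additional factor $(t-\tau)^{\theta'}$, while the secondary split at $\tau=t/2$ separates the singular regime near $\tau=t$ from the regular one, allowing the necessary Beta-function estimates to close. Secondly, the reduction of the double integral in $I_3$ via $\tilde\eta=\eta/(t-\tau)$ is essential, since without it the kernel $A^{-1}S''(t-\eta-\tau)$ is too singular for a direct splitting of the same type; once reduced, $I_3$ is handled by the same generic analysis as $I_2$.
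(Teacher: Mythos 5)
Your overall architecture (split each $I_k(t+h)-I_k(t)$ into a ``new interval'' piece and a ``kernel difference'' piece, subtract $g(t)$ to tame the non-integrable kernel difference, reduce the double integral in $I_3$ by $\tilde\eta=\eta/(t-\tau)$) matches the paper's proof. But there is one genuine gap and one secondary mismatch.

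The gap is in how you close the exponents for the $u$-dependent terms. Your bounds for the new-interval piece $N$ and for the frozen part $g(t)$ of $M$ produce only $h^{\alpha_1-\alpha_i}\|Au\|_{\theta'}$, because you estimate $\|g(\tau)\|\le CT^{\theta'}\|Au\|_{\theta'}$ by a constant. You then need $\theta'+\varepsilon\le\min_i(\alpha_1-\alpha_i)$, i.e.\ $\theta<\min_i(\alpha_1-\alpha_i)$. That restriction is fatal: the lemma must hold for every $\theta\in(0,1)$ and every $\theta'\in[0,\theta)$, and Theorem \ref{Holder} iterates it in steps of $\varepsilon$ all the way up to $\theta$, while $\alpha_1-\alpha_2$ can be arbitrarily small. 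The paper's way out is to exploit $u(0)=0$ twice: first $\|Au(s)\|_{L^2(\Omega)}\le s^{\theta''}\|Au\|_{\theta''}\le s^{\theta''}\|Au\|_{\theta'}$ for a \emph{reduced} exponent $\theta''\in[0,\theta']$ (using $C^{\theta'}\subset C^{\theta''}$), which converts the kernel exponent $\alpha_1-\alpha_i$ into $\alpha_1-\alpha_i+\theta''$; and second, the free parameter $\theta''$ is then tuned by the relations $\alpha_1-\alpha_2+\theta''=\theta'+\varepsilon$ and $\alpha_1-\alpha_\ell+\theta''<1$ (the latter for convergence of $\int_1^\infty\tau^{\alpha_1-\alpha_i-2+\theta''}d\tau$), e.g.\ $\theta''=c_0\min\{1-\alpha_1+\alpha_\ell,\theta'\}$ with $c_0$ close to $1$. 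This yields a positive $\varepsilon$ without any smallness assumption on $\theta$ relative to $\alpha_1-\alpha_2$. Without this device your argument does not prove the stated lemma.

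Secondarily, note that what the paper actually proves (and what Theorem \ref{Holder} needs to bootstrap $\|Au\|_\theta\le C\|F\|_\theta$) is the estimate for $\|A\mathcal{K}u(t+h)-A\mathcal{K}u(t)\|_{L^2(\Omega)}$; the missing $A$ in the displayed statement is a typo. Your treatment of the $F$-term with the kernel bound $\|A^{-1}S'(s)\|\le Cs^{\alpha_1-1}$ only reaches the version without $A$; for $AI_1$ the kernel is $S'$ itself, with $\|S'(s)\|\le Cs^{-1}$, and the $h^\theta\|F\|_\theta$ bound requires the full $F(\tau)-F(t)$ subtraction argument of Theorem 2.4 in \cite{SakYam} rather than the direct bound $\|F(\tau)\|\le\tau^\theta\|F\|_\theta$.
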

\begin{proof}
We recall the definition of $I_j$, $j=1,\cdots,5$ in (\ref{solution aF}).
Let $h>0$, and $t,t+h\in[0,T]$. Representing $AI_1$ as
\begin{align*}
AI_1(t)
=\int_0^tS'(\tau)F(t-\tau)d\tau + \int_0^tS'(\tau)B\cdot\nabla u(t-\tau)d\tau=:I_{11}(t)+I_{12}(t).
\end{align*}
By an argument similar to the proof of Theorem 2.4 in \cite{SakYam}, we can prove that
\begin{align*}
\|I_{11}(t+h)-I_{11}(t)\|_{L^2(\Omega)}
\leq Ch^{\theta}\|F\|_{\theta}. 
\end{align*}

For $0\leq t<t+h\leq T$, $h<1$, we have
\begin{align}
&I_{12}(t+h)-I_{12}(t)
=\int_{-h}^{t} S'(\tau+h)(B\cdot\nabla u(t-\tau))d\tau
-\int_0^t S'(\tau)(B\cdot\nabla u(t-\tau))d\tau \nonumber\\
=&\int_{-h}^0 A^{-\frac{1}{2}}S'(\tau+h)A^{\frac{1}{2}}(B\cdot\nabla u(t-\tau)-B\cdot\nabla u(t))d\tau
+(S(t+h)-S(t))(B\cdot\nabla u(t))
\nonumber\\
&+\int_0^t A^{-\frac{1}{2}}(S'(\tau+h)-S'(\tau))A^{\frac{1}{2}}(B\cdot\nabla u(t-\tau)-B\cdot\nabla u(t))d\tau
=:J_1^h(t)+J_2^h(t)+J_3^h(t). \nonumber
\end{align}
Using the estimate (\ref{Ar-1s'}) and (\ref{Ar-1s''}), we find 
\begin{align}
\|J_1^h(t)\|_{L^2(\Omega)}\leq C\int_{-h}^0 (\tau+h)^{\frac{\alpha_1}{2}-1} \tau^{\theta'}\|Au\|_{\theta'}
\leq Ch^{\frac{\alpha_1}{2}+\theta'}\|Au\|_{\theta'}. \nonumber
\end{align}
and noting that $\int_{\tau}^{\tau+h}S''(\xi)d\xi=S'(\tau+h)-S'(\tau)$, and again using (\ref{Ar-1s''}), 
we can prove that
\begin{align}\label{J_2}
&\|J_3^h(t)\|_{L^2(\Omega)}
\leq Ch^{\frac{\alpha_1}{2}+\theta'}
  \int_0^{\frac{t}{h}} (\tau^{\frac{\alpha_1}{2}-1}-(\tau+1)^{\frac{\alpha_1}{2}-1})\tau^{\theta'} d\tau 
  \|Au\|_{\theta'}.
\end{align}
In the case when $0\leq t \leq h$, according to (\ref{J_2}), we derive the estimate
\begin{align}\label{J_2t<h}
\|J_3^h(t)\|_{L^2(\Omega)}
\leq Ch^{\frac{\alpha_1}{2}+\theta'}
  \int_0^1 (\tau^{\frac{\alpha_1}{2}-1}-(\tau+1)^{\frac{\alpha_1}{2}-1})\tau^{\theta'} d\tau\|Au\|_{\theta'}
\leq Ch^{\frac{\alpha_1}{2}+\theta'} \|Au\|_{\theta'}.
\end{align}
As for the case $t>h$, we represent $J_3^h(t)$ as
\[
\|J_3^h(t)\|_{L^2(\Omega)}
\leq  Ch^{\frac{\alpha_1}{2}+\theta'}\left(\int_0^1 + \int_0^{\frac{t}{h}}\right)
 (\tau^{\frac{\alpha_1}{2}-1}-(\tau+1)^{\frac{\alpha_1}{2}-1})\tau^{\theta'} d\tau\|Au\|_{\theta'},
\] 
Therefore, since (\ref{J_2t<h}) and the inequality 
$\tau^{\frac{\alpha_1}{2}-1}-(\tau+1)^{\frac{\alpha_1}{2}-1} \leq C\tau^{\frac{\alpha_1}{2}-2}$,\ $\tau>1$,
it follows that
\begin{align*}
\|J_3^h(t)\|_{L^2(\Omega)}
\leq Ch^{\frac{\alpha_1}{2}+\theta'} \|Au\|_{\theta'}
     +Ch^{\frac{\alpha_1}{2}+\theta'}
       \int_1^{\infty} \tau^{\frac{\alpha_1}{2}-2}\tau^{\theta'} d\tau\|Au\|_{\theta'}
\leq Ch^{\frac{\alpha_1}{2}+\theta'} \|Au\|_{\theta'},
\end{align*}

We estimate $J_2^h(t)$. 
Since $u(0)=0$, it follows that 
$\|A^{\frac{1}{2}}B\cdot \nabla u(t)\|_{L^2(\Omega)}\leq Ct^{\theta'}\|Au\|_{\theta'}$, 
we can prove that
\begin{align*}
&\|J_3^h(t)\|_{L^2(\Omega)}
=\left\| \int_t^{t+h} A^{-\frac{1}{2}} S'(\eta)d\eta A^{\frac{1}{2}}B\cdot\nabla u(t)d\eta \right\|
_{L^2(\Omega)}
\leq Ch^{\frac{\alpha_1}{2}+\theta'}\|Au\|_{\theta'}.
\end{align*}

Therefore, we proved that
$
\|I_{12}(t+h)-I_{12}(t)\|_{L^2(\Omega)}
\leq Ch^{\theta'+\frac{\alpha_1}{2}}\|Au\|_{\theta'},\ 0\leq t<t+h\leq T.
$
We are to estimate $AI_2(t)$. Similarly to the calculation of $AI_1(t)$, 
for any $0\leq t<t+h\leq T$, we have
\begin{align*}
(AI_2&(t+h)-AI_2(t))
=\sum_{i=2}^{\ell}\frac{1}{\Gamma(1-\alpha_i)}
  \int_{-h}^0 A^{-1} S'(\tau+h) (\tau+h)^{-\alpha_i} A(q_iu(t-\tau)-qu(t))d\tau \\
&+\sum_{i=2}^{\ell}\frac{1}{\Gamma(1-\alpha_i)}
  \int_0^{t}   A^{-1} (S'(\tau+h)(\tau+h)^{-\alpha_i}-S'(\tau)\tau^{-\alpha_i}) A(q_iu(t-\tau)-qu(t))d\tau \\
&+\sum_{i=2}^{\ell}\frac{1}{\Gamma(1-\alpha_i)}
  \int_t^{t+h} A^{-1} S'(\tau)\tau^{-\alpha_i} A(q_iu(t))d\tau = I_{21}^h(t)+I_{22}^h(t)+I_{23}^h(t).
\end{align*}
Therefore, in terms of estimate $\|A^{-1}S'(t)\|\leq Ct^{\alpha_1-1}$, 
$\forall t>0$ and that $u\in X_{\theta'}$, we obtain
\begin{align*}
\|I_{21}^h(t)\|_{L^2(\Omega)}
\leq C \int_{-h}^0 \sum_{i=2}^{\ell}(\tau+h)^{\alpha_1-\alpha_i-1}|\tau|^{\theta'} \|Au\|_{\theta'}
\leq C\sum_{i=2}^{\ell}h^{\alpha_1-\alpha_i+\theta'} \|Au\|_{\theta'},\ 
0\leq t<t+h\leq T,
\end{align*}
and noting that 
$\int_{\tau}^{\tau+h} d(S'(\eta)\eta^{-\alpha_i})=S'(\tau+h)(\tau+h)^{-\alpha_i}-S'(\tau)\tau^{-\alpha_i}$,
$i=2,\cdots,\ell$. Therefore for $\forall \theta''\in[0,\theta']$, 
again using (\ref{Ar-1s'}) and (\ref{Ar-1s''}) we have
\begin{align*}
\|I_{22}^h(t)\|_{L^2(\Omega)}
\leq&\ C\sum_{i=2}^{\ell}\left\|
\int_0^{t}\int_{\tau}^{\tau+h} A^{-1}S''(\eta)\eta^{-\alpha_i}d\eta 
A(q_iu(t-\tau)-q_iu(t))d\tau
\right\|_{L^2(\Omega)} \\
&+C\sum_{i=2}^{\ell}\left\|
\int_0^{t}\int_{\tau}^{\tau+h} A^{-1}\alpha_iS'(\eta)\eta^{-\alpha_i-1}d\eta 
A(q_iu(t-\tau)-q_iu(t))d\tau
\right\|_{L^2(\Omega)} \\
\leq&\ C\sum_{i=2}^{\ell}h^{\alpha_1-\alpha_i+\theta''}
\int_0^{\frac{t}{h}} (\tau^{\alpha_1-\alpha_i-1}-(\tau+1)^{\alpha_1-\alpha_i-1}) \tau^{\theta''}d\tau
 \|Au\|_{\theta''},\ 0\leq t<t+h\leq T.
\end{align*}
If $0\leq t\leq h$, then we have 
\[
\|I_{22}^h(t)\|_{L^2(\Omega)} 
\leq C\sum_{i=2}^{\ell}h^{\alpha_1-\alpha_i+\theta''}\|Au\|_{\theta''},\ 0\leq t<t+h\leq T.
\]
If $t>h$, we choose $\varepsilon>0$, $\theta''\in[0,\theta']$ such that $\alpha_1-\alpha_{\ell}+\theta''<1$, 
and $\alpha_1-\alpha_2+\theta''=\theta'+\varepsilon$. 
In fact, we can choose $\theta'':=c_0\min\{1-\alpha_1+\alpha_{\ell},\theta'\}$, 
$\varepsilon:=\alpha_1-\alpha_2+\theta''-\theta'$, here $0<c_0<1$ is sufficiently close to $1$.
Using the fact that $C^{\alpha}([0,T];L^2(\Omega))\subset C^{\beta}([0,T];L^2(\Omega))$, where $0\leq\beta\leq\alpha\leq 1$, we obtain that for 
$0<h<1$ the following estimate
\begin{align*}
&\|I_{22}^h(t)\|_{L^2(\Omega)} 
\leq C\sum_{i=2}^{\ell}h^{\alpha_1-\alpha_i+\theta''}
   \left(
         1+\sum_{i=2}^{\ell}\int_1^{\infty} 
                             (\tau^{\alpha_1-\alpha_i-1}-(\tau+1)^{\alpha_1-\alpha_i-1})\tau^{\theta''} d\tau
   \right)\|Au\|_{\theta''} \\
\leq&\ C\sum_{i=2}^{\ell}h^{\alpha_1-\alpha_i+\theta''}\|Au\|_{\theta''}
   +C\sum_{i=2}^{\ell}h^{\alpha_1-\alpha_i+\theta''}\int_1^{\infty} \tau^{\alpha_1-\alpha_i-2}\tau^{\theta''} d\tau
   \|Au\|_{\theta''}
\leq Ch^{\theta'+\varepsilon}\|Au\|_{\theta'},
\end{align*}
holds. Choosing $\varepsilon>0$, $\theta''\in[0,\theta']$ as in the calculation of $I_{22}^h(t)$, then we have
\begin{align*}
&\|I_{23}^h(t)\|_{L^2(\Omega)}
\leq C\int_t^{t+h} \sum_{i=2}^{\ell}\tau^{\alpha_1-\alpha_i-1}t^{\theta''}d\tau\|Au\|_{\theta''}
\leq C\int_t^{t+h} \sum_{i=2}^{\ell}\tau^{\alpha_1-\alpha_i-1}\tau^{\theta''}d\tau\|Au\|_{\theta''}.
\end{align*}
Consequently, we have
\begin{align*}
\|AI_2(t+h)-AI_2(t)\|_{L^2(\Omega)}
\leq Ch^{\theta'+\varepsilon}\|Au\|_{\theta'}, 0\leq t<t+h\leq T, 0<h<1.
\end{align*}

It remains to estimate $\|AI_3(t+h)-AI_3(t)\|_{L^2(\Omega)}$.
For brevity, for $i=2,\cdots,\ell$ we set
\[
g_i(\tau)
:=\frac{1}{\Gamma(1-\alpha_i)}
   \int_0^{\tau} A^{-1}S''(\tau-\eta)(\eta^{-\alpha_i}-\tau^{-\alpha_i})d\eta.
\]
For $0\leq t<t+h\leq T$, similarly to the above calculation, we have
\begin{align}
&AI_3(t+h)-AI_3(t)\nonumber\\
=&\sum_{i=2}^{\ell}\int_{-h}^0 g_i(\tau+h) A(q_iu(t-\tau)-q_iu(t))d\tau
 +\sum_{i=2}^{\ell}\int_0^t (g_i(\tau+h)-g_i(\tau)) A(q_iu(t-\tau)-q_iu(t))d\tau \nonumber\\
 &+\sum_{i=2}^{\ell}\int_t^{t+h} g_i(\tau) A(q_iu(t))d\tau
 =:I_{31}^h(t)+I_{32}^h(t)+I_{33}^h(t),\ 0\leq t<t+h\leq T.
 \nonumber
\end{align}
Again applying the estimate (\ref{Ar-1s'}) and (\ref{Ar-1s''}), we obtain, for any $0\leq t<t+h\leq T$
\begin{align*}
\|I_{31}^h(t)\|_{L^2(\Omega)}
\leq C\sum_{i=2}^{\ell}
       \int_{-h}^0 \int_0^{\tau+h} (\tau+h-\eta)^{\alpha_1-2} (\eta^{-\alpha_i}-(\tau+h)^{-\alpha_i})d\eta
                                     |\tau|^{\theta'}d\tau \|Au\|_{\theta'}.
\end{align*}
By changing variable $\tilde{\eta}=\frac{\eta}{\tau+h}$, we find, 
for any $0\leq t<t+h\leq T$, $0<h<1$
\begin{align*}
\|I_{31}^h(t)\|_{L^2(\Omega)}
\leq&\ C\sum_{i=2}^{\ell}\int_{-h}^0 \int_0^1 (1-\eta)^{\alpha_1-2}
 (\eta^{-\alpha_i}-1)d\eta |\tau|^{\theta'}(\tau+h)^{\alpha_1-\alpha_i-1}d\tau 
 \|Au\|_{\theta'}\\
\leq& Ch^{\alpha_1-\alpha_2+\theta''}\|Au\|_{\theta''}
=Ch^{\theta'+\varepsilon}\|Au\|_{\theta'}.
\end{align*}
Choosing $\varepsilon>0$, $\theta''\in[0,\theta']$ as in the calculation of $I_{22}^h(t)$, 
we have for $0\leq t<t+h\leq T$
\begin{align}
&\|I_{33}^h(t)\|_{L^2(\Omega)}
\leq C\sum_{i=2}^{\ell}
       \int_t^{t+h}\int_0^{\tau}(\tau-\eta)^{\alpha_1-2}(\eta^{-\alpha_i}-\tau^{-\alpha_i})d\eta t^{\theta''}d\tau
         \|Au\|_{\theta''}\nonumber\\
=&\ C\sum_{i=2}^{\ell}
      \int_t^{t+h}\int_0^1(1-\eta)^{\alpha_1-2}(\eta^{-\alpha_i}-1)d\eta\tau^{\alpha_1-\alpha_i-1}t^{\theta''}d\tau
        \|Au\|_{\theta''}
\leq Ch^{\theta'+\varepsilon}\|Au\|_{\theta'}, 0<h<1.\nonumber
\end{align}
In order to estimate $\|I_{32}^h(t)\|_{L^2(\Omega)}$, we need to give an estimate of 
$\|g_i(\tau +h)-g_i(\tau )\|_{L^2(\Omega)}$, $i=2,\cdots,\ell$.
\begin{align}
{\Gamma(1-\alpha_i)}&(g_i(\tau+h)-g_i(\tau))
=\int_{-h}^0 A^{-1} S''(\tau-\eta)((\eta+h)^{-\alpha_i}-(\tau+h)^{-\alpha_i})d\eta\nonumber\\
 &+\int_0^{\tau} A^{-1} S''(\tau-\eta)
  \left((\eta+h)^{-\alpha_i}-(\tau+h)^{-\alpha_i}-\eta^{-\alpha_i}+\tau^{-\alpha_i}\right)d\eta
=:J_{4}^{hi}(\tau)+J_{5}^{hi}(\tau).\nonumber
\end{align}
The case $t\leq h$ is considered firstly. To estimate $J_{4}^{hi}(\tau)$ 
$0\leq \tau\leq t\leq h$, we represent it in the form
\begin{align*}
&\|J_{4}^{hi}(\tau)\|
\leq C\left(\int_{-h}^{-\frac{h}{2}} + \int_{-\frac{h}{2}}^0\right) (\tau-\eta)^{\alpha_1-2}((\eta+h)^{-\alpha_i}-(\tau+h)^{-\alpha_i})d\eta := J_{41}^{hi}+J_{42}^{hi}.
\end{align*}
For $J_{41}^{hi}$. Using the inequality $(\tau-\eta)^{\alpha_1-2}\leq (\frac{h}{2})^{\alpha_1-2}$, 
$\forall \tau\in[0,h]$,
$\forall \eta\in[-h,-\frac{h}{2}]$, we have
\begin{align}
\|J_{41}^{hi}(\tau)\|
\leq C\int_{-h}^{-\frac{h}{2}} h^{\alpha_1-2} ((\eta+h)^{-\alpha_i}-(\tau+h)^{-\alpha_i})d\eta
\leq Ch^{\alpha_1-\alpha_i-1}.\nonumber
\end{align}
For $J_{42}^{hi}$. Noting that 
$(\eta+h)^{-\alpha_i}-(\tau+h)^{-\alpha_i}\leq C(\tau-\eta)(\eta+h)^{-\alpha_i-1}$, $\forall\tau\in[0,h]$, $\forall\eta\in[-\frac{h}{2},0]$,
we have
\begin{align*}
\|J_{42}^{hi}(\tau)\|
\leq C\int_{-\frac{h}{2}}^0 (\tau-\eta)^{\alpha_1-1}(\eta+h)^{-\alpha_i-1}d\eta
\leq C\tau^{\alpha_1-1}h^{-\alpha_i}.
\end{align*}
Finally we deduce
\[
\|J_{4}^{hi}(\tau)\|\leq Ch^{\alpha_1-\alpha_i-1} +Ch^{-\alpha_i}\tau^{\alpha_1-1}, \forall 0<\tau\leq h.
\]
It remains to estimate $J_{5}^{hi}(\tau)(0< \tau\leq h)$. From (\ref{Ar-1s''}), we see that
\begin{align}
&\|J_{5}^{hi}(\tau)\|
\leq C\int_0^{\tau} (\tau-\eta)^{\alpha_1-2}
  \left|(\eta+h)^{-\alpha_i}-(\tau+h)^{-\alpha_i}-\eta^{-\alpha_i}+\tau^{-\alpha_i}\right|d\eta\nonumber\\
\leq&\ C\int_0^{\frac{\tau}{2}} (\tau-\eta)^{\alpha_1+\alpha_i-2}
  \left|(\eta+h)^{-\alpha_i}(\tau+h)^{-\alpha_i}+\eta^{-\alpha_i}\tau^{-\alpha_i}\right|d\eta\nonumber\\
  &+C\int_{\frac{\tau}{2}}^{\tau} (\tau-\eta)^{\alpha_1-2}
  \left((\eta+h)^{-\alpha_i}-(\tau+h)^{-\alpha_i}-\eta^{-\alpha_i}+\tau^{-\alpha_i}\right)d\eta
=:J_{51}^{hi}+J_{52}^{hi}, 0< \tau \leq h.\nonumber
\end{align}
For $J_{51}^{hi}$. Using the inequality 
$(\tau-\eta)^{\alpha_1+\alpha_i-2}\leq (\frac{\tau}{2})^{\alpha_1+\alpha_i-2}$, 
$\forall \tau\geq 0$,
$\forall \eta\in[0,\frac{\tau}{2}]$, we have
\begin{align}
\|J_{51}^{hi}(\tau)\|
\leq& C\int_0^{\frac{\tau}{2}} \tau^{\alpha_1+\alpha_i-2}(h^{-2\alpha_i}+ \eta^{-\alpha_i}\tau^{-\alpha_i})d\eta
\leq Ch^{-2\alpha_i}\tau^{\alpha_1+\alpha_i-1} + C\tau^{\alpha_1-\alpha_i-1},\ 0<\tau\leq h.
\nonumber
\end{align}
For $J_{52}^{hi}$. Using the inequality 
\begin{align}
(\eta+h)^{-\alpha_i}-(\tau+h)^{-\alpha_i} &\leq C(\tau-\eta)(\eta+h)^{-\alpha_i-1}, 0\leq \eta\leq \tau,\nonumber\\
\eta^{-\alpha_i}-\tau^{-\alpha_i} &\leq C(\tau-\eta)\eta^{-\alpha_i-1}, 0<\eta\leq \tau ,\nonumber
\end{align}
 we have
 \begin{align}
 \|J_{52}^{hi}(\tau)\|
\leq C\int_{\frac{\tau}{2}}^{\tau} (\tau-\eta)^{\alpha_1-1} \eta^{-\alpha_i-1}d\eta
\leq C\tau^{\alpha_1-\alpha_i-1},\ 0<\tau\leq h.\nonumber
 \end{align}

Collecting the estimates for $J_{4}^{hi}(\tau)$ and $J_{5}^{hi}(\tau)$  
on $0< \tau\leq t\leq h$ we obtain 
\begin{align} \label{g <h}
\|g_i(\tau +h)-g_i(\tau)\|
\leq C\tau^{\alpha_1-\alpha_i-1},\ 0< \tau\leq t\leq h, i=2,\cdots,\ell.
\end{align}
We choose $\varepsilon>0$, $\theta''\in[0,\theta']$ as in the calculation of $I_{22}^h(t)$. Then (\ref{g <h}) yields the estimate
\begin{align}
\|I_{32}^h(t)\|_{L^2(\Omega)}
\leq C\sum_{i=2}^{\ell}\int_0^t \|g_i(\tau+h)-g_i(\tau)\| \tau^{\theta''} d\tau\|Au\|_{\theta''}
\leq  Ch^{\theta'+\varepsilon}\|Au\|_{\theta'}, 0\leq t\leq h<1. \nonumber
\end{align}
In the case when $t>h$, from the above calculation, we have the following estimate
\begin{align}\label{I_9}
\|I_{32}^h(t)\|_{L^2(\Omega)}
\leq Ch^{\theta'+\varepsilon}\|Au\|_{\theta'} 
+\sum_{i=2}^{\ell}\int_h^t \|(g_i(\tau+h)-g_i(\tau))A(q_iu(t-\tau)-q_iu(t))\|_{L^2(\Omega)}d\tau.
\end{align}
After the change of variables, we represent $g_i(\tau+h)-g_i(\tau)$ in the form:
\begin{align}
{\Gamma(1-\alpha_i)}(g_i(\tau+h)-g_i(\tau))
=&\left((\tau+h)^{1-\alpha_i}-\tau^{1-\alpha_i}\right)
  \int_0^{1} A^{-1} S''\big((1-\eta)(\tau+h)\big)(\eta^{-\alpha_i}-1)d\eta\nonumber\\
 &-\tau^{1-\alpha_i} 
 \int_0^{1} A^{-1}\int_{\tau-\eta \tau}^{(1-\eta)(\tau+h)} S'''(\xi)d\xi(\eta^{-\alpha_i}-1)d\eta.\nonumber
\end{align}
Further, we shall need the inequalities:
\begin{align}
(\tau+h)^{1-\alpha_i}-\tau^{1-\alpha_i}\leq Ch\tau^{-\alpha_i},\forall \eta\in[h,t],\nonumber\\
\|A^{-1}S'''(\xi)\|\leq C\xi^{\alpha_1-3},\forall \xi>0.\nonumber
\end{align}
Then we have
\begin{align}
&{\Gamma(1-\alpha_i)}\|g_i(\tau+h)-g_i(\tau)\|\nonumber\\
\leq&Ch\tau^{-\alpha_i}
  \int_0^{1} \big((1-\eta)(\tau+h)\big)^{\alpha_1-2}(\eta^{-\alpha_i}-1)d\eta
 +C\tau^{1-\alpha_i} 
 \int_0^{1} \int_{\tau-\eta \tau}^{(1-\eta)(\tau+h)} \xi^{\alpha_1-3}d\xi(\eta^{-\alpha_i}-1)d\eta\nonumber\\
\leq&\ Ch\tau^{\alpha_1-\alpha_i-2}
  \int_0^{1} (1-\eta)^{\alpha_1-2}(\eta^{-\alpha_i}-1)d\eta
\leq Ch\tau^{\alpha_1-\alpha_i-2},\ h\leq \tau\leq t. \nonumber
\end{align}

Consequently
$
\|g_i(\tau +h)-g_i(\tau)\|
\leq Ch\tau^{\alpha_1-\alpha_i-2}, \forall \tau\geq h.
$
Applying this estimate to (\ref{I_9}), for $t>h$, $0<h<1$, we deduce that
\begin{align}
\|I_{32}^h(t)\|_{L^2(\Omega)}
\leq Ch^{\theta'+\varepsilon}\|Au\|_{\theta'} 
+ C\sum_{i=2}^{\ell}\int_h^{\infty} h\tau^{\alpha_1-\alpha_i+\theta''-2} d\tau\|Au\|_{\theta''}
\leq Ch^{\theta'+\varepsilon}\|Au\|_{\theta'}.\nonumber
\end{align}
Collecting the above estimates, we have 
\begin{align}
\|A\mathcal{K}u(t+h)-A\mathcal{K}u(t)\|_{L^2(\Omega)}
\leq Ch^{\theta}\|F\|_{\theta}
+Ch^{\theta'+\varepsilon}\|Au\|_{\theta'},\forall \theta'\in[0,\theta).
\end{align}
This complete the proof of the lemma.
\end{proof}

\begin{theorem}[H\"{o}lder estimate] \label{Holder} Suppose that $a=0$ and $F\in X_{\theta}$ with $\theta\in(0,1)$. 
Then the initial-boundary value problem (\ref{multifrac}) admits a unique mild solution $u\in X_{\theta}$.
Moreover there exists a constant $C>0$ such that
\[
\|Au\|_{\theta} \leq C \|F\|_{\theta}.
\]
\end{theorem}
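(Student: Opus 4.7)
The plan is to apply the Banach fixed-point theorem to the integral operator $\mathcal{K}$ of (\ref{Ku}), viewed as an affine map from $(X_\theta, \|A\cdot\|_\theta)$ to itself. Because $\mathcal{K}$ is affine in $u$ with linear part independent of $F$, the contraction argument will simultaneously produce existence, uniqueness in $X_\theta$, and the linear estimate $\|Au\|_\theta \leq C\|F\|_\theta$.

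Let $\varepsilon>0$ be the constant furnished by Lemma \ref{Ku(t+h)-Ku(t)} and fix any $\varepsilon'\in(0,\varepsilon)$. Applying the lemma with $\theta'=\theta-\varepsilon'$ and using $\|Au\|_{\theta-\varepsilon'}\le\|Au\|_\theta$, I get, for every $u\in X_\theta$ and every $0\le t<t+h\le T$,
\[
\|A\mathcal{K}u(t+h)-A\mathcal{K}u(t)\|_{L^2(\Omega)}
\le Ch^\theta\|F\|_\theta + Ch^{\theta+(\varepsilon-\varepsilon')}\|Au\|_\theta
\le Ch^\theta\bigl(\|F\|_\theta+T^{\varepsilon-\varepsilon'}\|Au\|_\theta\bigr).
\]
Since every integral defining $\mathcal{K}u$ starts at $0$ and $a=0$, one has $\mathcal{K}u(0)=0$; evaluating the inequality at $t=0$ then produces the sup-norm bound, so $\mathcal{K}$ maps $X_\theta$ into itself with $\|A\mathcal{K}u\|_\theta\le C_T(\|F\|_\theta+T^{\varepsilon-\varepsilon'}\|Au\|_\theta)$. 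Because $\mathcal{K}$ is affine in $u$, the $F$-contribution cancels in the difference $\mathcal{K}u_1-\mathcal{K}u_2$, so the same bound applied with $F$ replaced by $0$ gives
\[
\|A\mathcal{K}u_1-A\mathcal{K}u_2\|_\theta \le CT^{\varepsilon-\varepsilon'}\|Au_1-Au_2\|_\theta.
\]
For $T\le\delta$ with $C\delta^{\varepsilon-\varepsilon'}<1$ this is a strict contraction, and Banach's fixed-point theorem furnishes a unique fixed point $u\in X_\theta$ on $[0,\delta]$; absorbing the $\|Au\|_\theta$ term on the right of the first inequality then yields $\|Au\|_\theta\le C\|F\|_\theta$ on $[0,\delta]$.

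To extend from $[0,\delta]$ to arbitrary $T>0$ I repeat the continuation scheme of Theorem \ref{Fixed point any T}: set $t_0=\delta/2$, split the integral representation into the known contribution on $[0,t_0]$ (an inhomogeneity $I(t)$ already controlled by $\|F\|_\theta$ via the local estimate just established) and the unknown contribution on $[t_0,3t_0]$, and apply the local contraction to the shifted operator $\mathcal{E}$. Because the contraction constant depends only on the length $2t_0=\delta$ and not on the starting time, finitely many such steps cover $[0,T]$; chaining the local estimates yields the global bound $\|Au\|_\theta\le C_T\|F\|_\theta$. The fixed point coincides automatically with the mild solution produced by Theorem \ref{regularity results}(2), since $X_\theta\hookrightarrow C([0,T];D(A^\gamma))\cap L^2(0,T;H^2(\Omega))$ and uniqueness was already established there.

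The main obstacle has already been dispatched in Lemma \ref{Ku(t+h)-Ku(t)}: the delicate splittings relying on $F(0)=0$, $u(0)=0$, the identity $S'(t+h)-S'(t)=\int_t^{t+h}S''(\xi)\,d\xi$ (and its analogue for $S'(\tau)\tau^{-\alpha_i}$), and the change of variables $\tilde\eta=\eta/(\tau+h)$ in the double integrals defining $I_3$, are precisely what produces the small but crucial gain $\varepsilon>0$ in the Hölder exponent. Once that gain is at hand, the only subtleties left in the present theorem are (i) choosing $\varepsilon'<\varepsilon$ so that $T^{\varepsilon-\varepsilon'}$ supplies a genuinely contractive factor on short intervals, and (ii) transcribing the continuation argument of Theorem \ref{Fixed point any T} into the Hölder framework.
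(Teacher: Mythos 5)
Your local argument (the small-$T$ contraction in $X_\theta$ built on Lemma \ref{Ku(t+h)-Ku(t)}) is essentially sound, but it is not the paper's route, and the continuation step you rely on to reach arbitrary $T$ contains a genuine gap. The paper never runs a contraction in $X_\theta$: it takes the solution $u=\mathcal{K}u$ already produced on all of $[0,T]$ by Theorem \ref{regularity results}(2) (so no smallness of $T$ is needed), records $\|Au\|_{C([0,T];L^2(\Omega))}\le C\|F\|_\theta$, and then bootstraps the H\"older \emph{exponent} by applying the lemma to the fixed-point identity with $\theta'=0,\varepsilon,2\varepsilon,\dots$, gaining $\varepsilon$ at each step until $\theta$ is reached. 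Your scheme replaces this exponent bootstrap by a continuation in time, and that is exactly where the argument breaks.

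The obstruction is that the inhomogeneity $I(t)=\int_0^{t_0}(\cdots)\,d\tau$ appearing in the shifted operator $\mathcal{E}$ is in general \emph{not} in $C^\theta([t_0,3t_0];L^2(\Omega))$ after applying $A$. Near $t=t_0$ its increment is dominated by terms such as $\bigl(S(t+h-t_0)-S(t-t_0)\bigr)B\cdot\nabla u(t_0)$, whose $L^2$-norm is of order $h^{\alpha_1/2}\,\|A^{1/2}(B\cdot\nabla u)(t_0)\|$ (and of order $h^{\alpha_1-\alpha_i}$ for the tails of $I_2$, $I_3$). The prefactor $\|A^{1/2}(B\cdot\nabla u)(t_0)\|\sim t_0^{\theta}$ does not vanish, unlike the factor $t^{\theta'}$ that rescues the analogous term $J_2^h$ in the lemma, where one subtracts $u(t)$ and uses $u(0)=0$. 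Hence $AI$ has H\"older exponent only $\bar\alpha=\min\bigl(\tfrac{\alpha_1}{2},\min_i(\alpha_1-\alpha_i)\bigr)$ at $t_0^{+}$, and for $\theta>\bar\alpha$ (most of the admissible range) $\mathcal{E}$ does not map $C^\theta([t_0,3t_0];D(A))$ into itself, so the contraction cannot even be set up. The full solution is nonetheless $C^\theta$ across $t_0$, because the singular contribution $S(t-t_0)B\cdot\nabla u(t_0)$ of the tail cancels against the matching contribution of the local integral $\int_{t_0}^{t}$ --- a cancellation that your split fixed-point scheme cannot see since it estimates the two pieces separately. Either adopt the paper's exponent bootstrap, which requires no time-splitting, or redesign the continuation so that tail and local part are estimated together.
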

\begin{proof}
Since $C^{\theta}([0,T];L^2(\Omega))\subset L^{\infty}(0,T;L^2(\Omega))$, 
and applying the Theorem \ref{regularity results}, we find a unique
$u\in C([0,T];D(A^{\gamma}))$ for each $\gamma\in(0,1)$
such that $u=\mathcal{K}u$, and for any $\gamma\in(0,1)$, there exists constant $C>0$ such that
\begin{align} \label{u<F}
|u\|_{C([0,T];D(A^{\gamma}))}\leq C\|F\|_{C([0,T];L^2(\Omega))}.
\end{align}
Moreover, we can obtain $u\in C([0,T];H^2(\Omega))$ and the following estimate
$
\|u\|_{C([0,T];H^2(\Omega))} \leq C\|F\|_{\theta},
$
that is $\|Au\|_{C([0,T];L^2(\Omega))}\leq C\|F\|_{\theta}$.

By Lemma \ref{Ku(t+h)-Ku(t)}, we can see that for $0\leq t<t+h\leq T$, $0<h<1$,
\begin{align}\label{Au(t+h)-Au(t)}
\|Au(t+h)-Au(t)\|_{L^2(\Omega)}
\leq Ch^{\theta}\|F\|_{\theta}
+Ch^{\theta'+\varepsilon}\|Au\|_{\theta'},\forall \theta'\in[0,\theta).
\end{align}

Let $\theta'=0$ in (\ref{Au(t+h)-Au(t)}). We have
$
\|Au\|_{\varepsilon}\leq C(\|F\|_{\theta}+\|Au\|_0)\leq C\|F\|_{\theta}.
$
Next, let $\theta'=\varepsilon$. Repeating the argument in the above, we have
$
\|Au\|_{2\varepsilon}\leq C(\|F\|_{\theta}+\|Au\|_{\varepsilon})\leq C\|F\|_{\theta}.
$
Step by step, it is clear from the above argument that
$
\|Au\|_{\theta}\leq C\|F\|_{\theta}.
$
Then theorem is thus proved.
\end{proof}


\section{Analyticity}
We set $F=0$ in initial-boundary value problem (\ref{multifrac}):
 \begin{equation} \label{multifrac F=0}
   \left\{ {\begin{array}{*{20}c}
  { \partial_t^{\alpha_1} u(t) + \sum_{j=2}^{\ell}q_j(x) \partial_t^{\alpha_j} u(t)
   =-\mathcal{A}u(t) + B(x)\cdot \nabla u(t)},\ 0<t\leq T, \hfill \\
  {u(x,0)=a,\ x\in \Omega,} \hfill \\
  {u(x,t)=0,\ x\in \partial\Omega,\ t \in (0,T).} \hfill
 \end{array} } \right.
 \end{equation}
According to the results in Theorem \ref{regularity results}, we see that the solution to (\ref{multifrac F=0}) satisfies the 
integral equation (\ref{solution aF}). After the change of variables, we have
\begin{align}\label{Standard form of solution}
u(t)
&=S(t)a-\sum_{i=2}^{\ell}\frac{t^{1-\alpha_i}}{\Gamma(1-\alpha_i)}
  \int_0^1 A^{-1}S'\big((1-\tau)t\big)\tau^{-\alpha_i}q_iad\tau 
-t\int_0^1 A^{-1}S'(\tau t)B\cdot\nabla u\big((1-\tau)t\big)d\tau \nonumber\\
&+\sum_{i=2}^{\ell}\frac{t^{1-\alpha_i}}{\Gamma(1-\alpha_i)}
    \int_0^1 A^{-1}S'(\tau t)\tau^{-\alpha_i}q_iu\big((1-\tau)t\big)d\tau \nonumber\\
&+\sum_{i=2}^{\ell}\frac{t^{2-\alpha_i}}{\Gamma(1-\alpha_i)}\int_0^1 \int_0^1 A^{-1}S''\big((1-\eta)\tau t\big)(\eta^{-\alpha_i}-1)
 q_iu\big((1-\tau)t\big)\tau^{1-\alpha_i}d\eta d\tau.
\end{align}
Moreover, we extend the variable $t$ in (\ref{solution aF}) from $(0,T)$ to the sector $\mathcal{S}:=\{z\neq0; |\arg z|<\frac{\pi}{2}\}$, and setting $u_0=0$, we define $u_{n+1}(z)(n=0,1,\cdots)$ , $z\in\mathcal{S}$ as follows:
\begin{align} \label{induction u_n(z)} 
u_{n+1}(z)
=&\sum_{i=2}^{\ell}
     \frac{-z^{1-\alpha_i}}{\Gamma(1-\alpha_i)}\int_0^1 A^{-1}S'\big((1-\tau)z\big)\tau^{-\alpha_i}q_iad\tau 
-z\int_0^1 A^{-1}S'(\tau z)B\cdot\nabla u_n\big((1-\tau)z\big)d\tau \nonumber\\
&+S(z)a
+\sum_{i=2}^{\ell}\frac{z^{1-\alpha_i}}{\Gamma(1-\alpha_i)}
   \int_0^1 A^{-1}S'(\tau z)\tau^{-\alpha_i}q_iu_n\big((1-\tau)z\big)d\tau \nonumber\\
&+\sum_{i=2}^{\ell}\frac{z^{2-\alpha_i}}{\Gamma(1-\alpha_i)}
    \int_0^1 \int_0^1 A^{-1}S''\big((1-\eta)\tau z\big)(\eta^{-\alpha_i}-1)q_iu_n\big((1-\tau)z\big)
      \tau^{1-\alpha_i}d\eta d\tau.
\end{align}
From the definition of (\ref{S(t)}) and the property of Mittag-Leffler function, we see that for any $n\in\mathbb{N}$, $u_n(z)$ is analytic in the sector $\mathcal{S}$. Moreover the following estimate holds.
\begin{lemma} \label{induction estimate of Au_n(z)}
For any constants $0<\theta<\frac{\pi}{2}$ and $T>0$, there exist constants 
$M>0$ and $M_1>0$ such that the following estimate 
\begin{align} \label{induction estimate complex Au_n}
\|A^{\frac{1}{2}}u_{n+1}(z)-A^{\frac{1}{2}}u_n(z)\|_{L^2(\Omega)}
\leq M_1\frac{M^n|z|^{\bar{\alpha}n-\frac{\alpha_1}{2}}}
          {\Gamma(\bar{\alpha}n+1-\frac{\alpha_1}{2})}\|a\|_{L^2(\Omega)},\ n\in\mathbb{N}
\end{align}
holds for $\forall z\in\mathcal{S}_{T}^{\theta}:= \{z\in\mathcal{S}; |\Re z|\leq T, |\arg z|\leq \theta\}$, 
where 
$\bar{\alpha}:=\min_{i=2,\cdots,\ell}\{\alpha_1-\alpha_i,\frac{\alpha_1}{2}\}$.
\end{lemma}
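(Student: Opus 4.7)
The plan is induction on $n$. Subtracting two successive iterates of (\ref{induction u_n(z)}) cancels the $a$-only summands and gives
\begin{align*}
u_{n+1}(z)-u_n(z)
=\;&-z\int_0^1 A^{-1}S'(\tau z)\,B\cdot\nabla(u_n-u_{n-1})\bigl((1-\tau)z\bigr)\,d\tau\\
&+\sum_{i=2}^{\ell}\frac{z^{1-\alpha_i}}{\Gamma(1-\alpha_i)}\int_0^1 A^{-1}S'(\tau z)\tau^{-\alpha_i}q_i(u_n-u_{n-1})\bigl((1-\tau)z\bigr)\,d\tau\\
&+\sum_{i=2}^{\ell}\frac{z^{2-\alpha_i}}{\Gamma(1-\alpha_i)}\int_0^1\!\!\int_0^1 A^{-1}S''\bigl((1-\eta)\tau z\bigr)(\eta^{-\alpha_i}-1)q_i(u_n-u_{n-1})\bigl((1-\tau)z\bigr)\tau^{1-\alpha_i}\,d\eta\,d\tau,
\end{align*}
which is the recursion driving the induction.

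A preliminary step is the complex extension of the kernel bounds (\ref{Ar-1s'})--(\ref{Ar-1s''}): since $\alpha_1<1$ and $\theta<\pi/2$, the sector $\mathcal{S}_T^{\theta}$ lies well inside the region of validity of the Mittag-Leffler asymptotic $|E_{\alpha_1,\beta}(-\lambda z^{\alpha_1})|\leq C/(1+\lambda|z|^{\alpha_1})$, so inserting it term by term in the eigenfunction expansions of $S'$ and $S''$ yields $\|A^{\gamma-1}S'(z)\|\leq C|z|^{\alpha_1-1-\alpha_1\gamma}$ and $\|A^{\gamma-1}S''(z)\|\leq C|z|^{\alpha_1-2-\alpha_1\gamma}$ uniformly on $\mathcal{S}_T^{\theta}$ for $0\leq\gamma\leq 1$, and note that $|z|\leq T/\cos\theta$ there.

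For $n=0$ only the $a$-dependent terms of (\ref{induction u_n(z)}) contribute to $u_1$, and applying $A^{1/2}$ with the bounds above at $\gamma=1/2$ gives $\|A^{1/2}u_1(z)\|_{L^2(\Omega)}\leq C|z|^{-\alpha_1/2}\|a\|_{L^2(\Omega)}$, from which $M_1$ is fixed to absorb $\Gamma(1-\alpha_1/2)^{-1}$. For the inductive step, $A^{1/2}$ is pushed through each of the three surviving integrals; the complex kernel bounds with $\gamma=1/2$, combined with $\|B\cdot\nabla v\|\leq C\|A^{1/2}v\|$ and the multiplier estimate $\|A^{1/2}(q_iv)\|\leq C\|A^{1/2}v\|$ already exploited in Theorem \ref{Estimate}, reduce each integrand to $\|A^{1/2}(u_n-u_{n-1})\bigl((1-\tau)z\bigr)\|$, to which the inductive hypothesis is applied. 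Since $|z|$ is bounded on $\mathcal{S}_T^{\theta}$ and $\bar\alpha\leq\min\{\alpha_1/2,\alpha_1-\alpha_i\}$ for every $i$, each kernel factor can be weakened to $(\tau|z|)^{\bar\alpha-1}$ (respectively $(\tau|z|)^{\bar\alpha-2}$ in the $S''$ contribution) at the cost of an absorbable constant; after pulling out the common power of $|z|$ each contribution collapses to the Beta integral
\[
\int_0^1\tau^{\bar\alpha-1}(1-\tau)^{\bar\alpha(n-1)-\alpha_1/2}\,d\tau=\frac{\Gamma(\bar\alpha)\,\Gamma\bigl(\bar\alpha(n-1)+1-\alpha_1/2\bigr)}{\Gamma\bigl(\bar\alpha n+1-\alpha_1/2\bigr)},
\]
producing exactly the denominator demanded by the conclusion. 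The double integral in the third term carries an additional finite factor $\int_0^1(\eta^{-\alpha_i}-1)(1-\eta)^{\alpha_1-2+\alpha_1/2}d\eta$, whose convergence was already verified in the analysis of $J_i$ inside Theorem \ref{Estimate}. Choosing $M$ large enough to dominate the sum of the three resulting constants closes the induction.

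The main obstacle is the bookkeeping required to make all three terms, and the $\ell-1$ summands over $i$ within each, collapse to the same target form. The definition $\bar\alpha=\min\{\alpha_1/2,\alpha_1-\alpha_2\}$ is calibrated precisely so that, simultaneously for the advection, the $S'$-memory, and the $S''$-memory contributions, every surplus power of $|z|$ being absorbed into $M$ has nonnegative exponent; the hypothesis $\alpha_1<1$ is used both to place $\mathcal{S}_T^{\theta}$ inside the validity range of the Mittag-Leffler asymptotic and to ensure integrability of the $(1-\tau)$-singularity of the inductive hypothesis when $n=1$.
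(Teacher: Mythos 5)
Your proposal is correct and follows essentially the same route as the paper: induction on $n$, with the difference of successive iterates killing the $a$-dependent terms, the complex-sector versions of the bounds (\ref{Ar-1s'})--(\ref{Ar-1s''}) applied at $\gamma=\frac{1}{2}$, reduction of every contribution to the Beta integral $B(\bar{\alpha},\bar{\alpha}(n-1)+1-\frac{\alpha_1}{2})$, and the choice $M=C\Gamma(\bar{\alpha})$ to close the induction. Your explicit justification of the Mittag-Leffler bound on the sector and of the absorption of surplus powers of $|z|$ only fills in details the paper leaves implicit.
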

Here and henceforth, the constants $C$, $M_1$, $M$ denote the constants which are independent of $n$, $a$ and $u$, but may depend on $\{\alpha_i\}_{i=1}^{\ell}$, $d$, $\Omega$, $\theta$, $T$, $\{q_j\}_{j=2}^{\ell}$ and the coefficients of $\mathcal{A}$.
\begin{proof}
Firstly, for $n=0$, by (\ref{Ar-1s'}) and (\ref{Ar-1s''}), noting that $|z|^{\frac{\alpha_1}{2}-\alpha_i}\leq C|z|^{-\frac{\alpha_1}{2}}$, $z\in\mathcal{S}_{T}^{\theta}$, we have
\begin{align*}
\|A^{\frac{1}{2}}(u_{1}(z)-u_0(z))\|_{L^2(\Omega)}
&\leq C|z|^{-\frac{\alpha_1}{2}}\|a\|_{L^2(\Omega)}
+C\sum_{i=2}^{\ell}|z|^{\frac{\alpha_1}{2}-\alpha_i}
    \int_0^1 (1-\tau)^{\frac{\alpha_1}{2}-1}\tau^{-\alpha_i}d\tau\|a\|_{L^2(\Omega)}\\
&\leq C|z|^{-\frac{\alpha_1}{2}}\|a\|_{L^2(\Omega)}.
\end{align*}

Next, for any $n\in\mathbb{N}$, taking the operator $A^{\frac{1}{2}}$ on the both side of (\ref{induction u_n(z)}), by the induction assumption and the (\ref{Ar-1s'}) and (\ref{Ar-1s''}) for the $z\in \mathcal{S}$, similarly to the argument in the Theorem \ref{Estimate}, we can prove that
\begin{align*}
&\|u_{n+1}(z)-u_n(z)\|_{D(A^{\frac{1}{2}})}
\leq CM_1\frac{M^{n-1}|z|^{\bar{\alpha}n-\tfrac{\alpha_1}{2}}}{\Gamma(\bar{\alpha}(n-1)+1-\tfrac{\alpha_1}{2})}
\int_0^1 \tau^{\bar{\alpha}-1} {(1-\tau)^{\bar{\alpha}(n-1)-\tfrac{\alpha_1}{2}}}d\tau \|a\|_{L^2(\Omega)} \\
=&CM_1\frac{M^{n-1}|z|^{\bar{\alpha}n-\tfrac{\alpha_1}{2}}}{\Gamma(\bar{\alpha}(n-1)+1-\tfrac{\alpha_1}{2})}
B(\bar{\alpha},\bar{\alpha}(n-1)+1-\tfrac{\alpha_1}{2})\|a\|_{L^2(\Omega)} \\
=&CM_1\Gamma(\bar{\alpha})M^{n-1}\frac{|z|^{\bar{\alpha}n-\tfrac{\alpha_1}{2}}}{\Gamma(\bar{\alpha}n+1-\tfrac{\alpha_1}{2})}\|a\|_{L^2(\Omega)}
=M_1\frac{M^n|z|^{\bar{\alpha}n-\frac{\alpha_1}{2}}}
         {\Gamma(\bar{\alpha}n+1-\frac{\alpha_1}{2})}\|a\|_{L^2(\Omega)},
\end{align*}
where we set $M:=C\Gamma(\bar{\alpha})$, $M_1:=C\Gamma(1-\frac{\alpha_1}{2})$.
\end{proof}

\begin{theorem} \label{analytic F=0} 
Suppose that $u\in C([0,T];L^2(\Omega))\cap C((0,T];H^2(\Omega)\cap H_0^1(\Omega))$ is the mild solution to
(\ref{multifrac F=0}), then $u:(0,T]\rightarrow H_0^1(\Omega)$ can be analytically extended to a sector
$\mathcal{S}=\{ z\neq0;|\arg z|<\frac{\pi}{2} \}$. 
\end{theorem}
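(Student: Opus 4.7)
The plan is to use the Picard-type iteration $\{u_n(z)\}$ defined in (\ref{induction u_n(z)}) as an explicit analytic approximation of $u$ on the sector $\mathcal{S}$. The idea is that each $u_n$ is an $H_0^1(\Omega)$-valued analytic function on $\mathcal{S}$, that Lemma \ref{induction estimate of Au_n(z)} yields uniform convergence of $\{u_n\}$ on every truncated sector $\mathcal{S}_T^{\theta}$ ($T>0$, $\theta<\frac{\pi}{2}$), and that on the positive real axis the limit coincides with the given solution $u$ by uniqueness of the fixed point of the integral equation (\ref{Standard form of solution}).

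First, I would verify that each $u_n(z)$ is analytic as an $H_0^1$-valued function on $\mathcal{S}$. This is straightforward by induction: $S(z)$, $S'(z)$ and $S''(z)$ inherit analyticity in $z\in\mathcal{S}$ from the fact that $E_{\alpha_1,\beta}(-\lambda_n z^{\alpha_1})$ is entire in $z^{\alpha_1}$ and $z^{\alpha_1}$ is analytic on $\mathcal{S}$, together with the eigenfunction series definition of $S(\cdot)$; the integrals in (\ref{induction u_n(z)}) over the fixed interval $[0,1]$ (or $[0,1]^2$) preserve analyticity since the integrands are analytic in $z$ and the estimates (\ref{Ar-1s'}), (\ref{Ar-1s''}) extended to complex arguments (valid on any sector $\mathcal{S}_T^{\theta}$) give uniform-in-parameter bounds needed to differentiate under the integral.

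Next, applying Lemma \ref{induction estimate of Au_n(z)}, the telescoping identity $u_N=\sum_{n=0}^{N-1}(u_{n+1}-u_n)$ is dominated in the $D(A^{\frac{1}{2}})$-norm by
\begin{equation*}
\sum_{n=0}^{\infty} M_1\frac{M^n|z|^{\bar{\alpha}n-\frac{\alpha_1}{2}}}{\Gamma(\bar{\alpha}n+1-\frac{\alpha_1}{2})}\|a\|_{L^2(\Omega)}
\leq M_1 T^{-\frac{\alpha_1}{2}}\|a\|_{L^2(\Omega)}E_{\bar{\alpha},1-\frac{\alpha_1}{2}}(MT^{\bar{\alpha}}),
\end{equation*}
which is finite on $\mathcal{S}_T^{\theta}$ thanks to the fact that the Mittag-Leffler function is entire. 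Hence $u_n\to \tilde u$ uniformly on $\mathcal{S}_T^{\theta}$ in $D(A^{\frac{1}{2}})=H_0^1(\Omega)$, and Weierstrass' theorem (in the Banach-space-valued setting) yields that $\tilde u:\mathcal{S}_T^{\theta}\to H_0^1(\Omega)$ is analytic. Since $T>0$ and $\theta\in(0,\frac{\pi}{2})$ were arbitrary, $\tilde u$ is analytic on the whole sector $\mathcal{S}$.

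Finally I would identify $\tilde u$ with the given solution $u$ on $(0,T]$. For real $z=t\in(0,T]$, the iteration (\ref{induction u_n(z)}) coincides with the Picard iteration for the integral equation (\ref{Standard form of solution}), and by the contraction argument used in Section 2 (Theorem \ref{Fixed point any T} together with the uniqueness in Theorem \ref{regularity results}(1)) this iteration has a unique fixed point in $C([0,T];L^2(\Omega))\cap L^p(0,T;H_0^1(\Omega))$, namely $u$. Therefore $\tilde u(t)=u(t)$ for every $t\in(0,T]$, and $\tilde u$ is the desired analytic extension. The main obstacle I anticipate is the first step: making rigorous the Banach-space-valued analyticity of each $u_n$, since this requires justifying differentiation under the integral with a $z$-uniform dominated integrand and extending the operator norm bounds (\ref{Ar-1s'}), (\ref{Ar-1s''}) from the real half-line to the complex sector via the known analytic properties of the Mittag-Leffler function.
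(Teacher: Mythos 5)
Your proposal is correct and follows essentially the same route as the paper: analyticity of each iterate $u_n$ from (\ref{induction u_n(z)}), uniform convergence on the truncated sectors $\mathcal{S}_{T}^{\theta}$ via Lemma \ref{induction estimate of Au_n(z)} and the Mittag-Leffler majorant, Weierstrass' theorem for Banach-space-valued functions, and identification of the limit with $u$ on the real axis by uniqueness. The only cosmetic difference is in that last step, where the paper shows the imaginary part of $\tilde u(t)$ solves the homogeneous problem with zero data and invokes uniqueness twice, whereas you identify $\tilde u$ directly as the unique fixed point of the real-variable integral equation; both rest on the same uniqueness result.
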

\begin{proof}
For any $\delta>0$, we denote $\mathcal{S}_{\delta,T}^{\theta}:=\{z\in\mathcal{S}_{T}^{\theta};|z|\geq\delta\}$.
From the definition of $u_n(z)$ in {(\ref{induction u_n(z)})}, it is easy to show that 
$
A^{\frac{1}{2}}u_n(z):\mathcal{S}_{\delta,T}^{\theta}\rightarrow L^2(\Omega)
$
is analytic. Hence according to Lemma \ref{induction estimate of Au_n(z)}, 
there exists $A^{\frac{1}{2}}\tilde{u}(z)\in L^2(\Omega)$ such that 
$\|A^{\frac{1}{2}}u_n(z) - A^{\frac{1}{2}}\tilde{u}(z)\|_{L^2(\Omega)}$ uniformly tends to $0$,
 $z\in\mathcal{S}_{\delta,T}^{\theta}$, as $n\rightarrow \infty$.
Therefore $A^{\frac{1}{2}}\tilde{u}(z)$ is analytic in $\mathcal{S}_{\delta,T}^{\theta}$. 
Moreover, since $\delta$, $T$, $\theta$ are arbitrarily chosen, then we deduce $A^{\frac{1}{2}}\tilde{u}(z)$
is actually analytic in the sector $\mathcal{S}$.

Finally, we show that $\tilde{u}(z)$ is just the mild solution $u$ to (\ref{multifrac F=0}) when the variable
$z$ is restricted on the interval $(0,T)$. In fact, denoting the imaginary part of $\tilde{u}(t)$, 
$\forall t\in (0,T)$ as $\Im u(t)$, we see that $\Im u(t)$ is the mild solution to the following 
initial-boundary problem:
\begin{equation}
   \left\{ {\begin{array}{*{20}c}
  { \partial_t^{\alpha_1} u(t) + \sum_{j=2}^{\ell}q_j(x) \partial_t^{\alpha_j} u(t)
   =-\mathcal{A}u(t) + B(x)\cdot \nabla u(t)},\ t>0, \hfill \\
  {u(x,0)=0,\ x\in \Omega,} \hfill \\
  {u(x,t)=0,\ x\in \partial\Omega,\ t \in (0,T).} \hfill
 \end{array} } \right.\nonumber
 \end{equation}
Using the uniqueness result of the above problem, we have $\Im u(t)=0$, $\forall t\in(0,T)$.
So that again by the uniqueness argument we see that $\tilde{u}(t)=u(t)$, $\forall t\in(0,T)$. This completes the proof of the theorem.
\end{proof}


\section{The weak unique continuation}
For the parabolic equation, there are a well known principle called unique continuation, generally speaking,
any solution of a parabolic equation that is defined on a domain $D$ must vanish in all of $D$ if it vanishes
 on an open set in $D$(See, e.g., \cite{Choulli}). Here for the fractional diffusion equation, we are to establish similar result to the parabolic equation.

\begin{theorem}[Weak unique continuation]\label{WUC}
Assuming that $0<\alpha_{\ell}<\cdots<\alpha_2<\alpha_1<1$, $a\in L^2(\Omega)$. Let $q_j$ be constant for 
$j=2,\cdots, \ell$. Suppose that $u\in C([0,T])\cap C((0,T];H^2(\Omega)\cap H_0^1(\Omega))$ is the mild solution to the following initial-boundary value problem: 
\begin{equation} \label{q_j constant}
   \left\{
  {\begin{array}{*{20}c}
  { \partial_t^{\alpha_1} u(x,t) + \sum_{j=2}^{\ell} q_j \partial_t^{\alpha_j} u(x,t)
   =-\mathcal{A}u(x,t) },\ 0<t\leq T, \hfill \\
  {u(x,0)=a(x),\ x\in \Omega,} \hfill \\
  {u(x,t)=0,\ x\in \partial\Omega,\ t \in (0,T].} \hfill
 \end{array} } \right.
 \end{equation}
Let $\omega\subset\Omega$ be an arbitrarily chosen subdomain and let $T>0$. Then $u(x,t)=0$, $x\in\omega$,
 $0<t<T$, implies $u=0$ in $\Omega\times(0,T)$.
\end{theorem}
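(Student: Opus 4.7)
The strategy is to combine the analyticity result of Theorem \ref{analytic F=0} with the Laplace transform to convert the unique continuation problem into a statement about eigenfunction expansions, which is then handled by classical elliptic unique continuation. I would carry out the following four steps.

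First, I would use Theorem \ref{analytic F=0} to extend the vanishing of $u$ on $\omega$ to the whole positive real axis. Since $u: (0,T] \to H_0^1(\Omega)$ extends analytically to the sector $\mathcal{S} = \{z\neq 0, |\arg z|<\pi/2\}$, the scalar functions $t \mapsto u(x,t)$ (in an $L^2(\omega)$ sense, after testing against a bump supported in $\omega$) are real-analytic on $(0,\infty)$. Vanishing on $(0,T)$ then forces $u(\cdot,t)|_\omega = 0$ in $L^2(\omega)$ for all $t>0$. Combined with the uniform bound $\|u(t)\|_{L^2(\Omega)}\le C\|a\|_{L^2(\Omega)}$ (which holds for all $t>0$ by the Mittag--Leffler decay and the mild-solution estimates in Section 2), the Laplace transform $\widehat u(x,s)=\int_0^\infty e^{-st}u(x,t)\,dt$ is well-defined as an $L^2(\Omega)$-valued analytic function on a right half-plane $\{\Re s > s_0\}$, and $\widehat u(x,s)=0$ for $x\in\omega$ on that half-plane.

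Second, I would take the Laplace transform of (\ref{q_j constant}). Using $\mathcal{L}\{\partial_t^{\alpha}u\}(s)=s^{\alpha}\widehat u(s)-s^{\alpha-1}a$ and the constancy of the $q_j$, one obtains the elliptic equation
\begin{equation*}
(A+P(s))\widehat u(\cdot,s) = Q(s)\,a(\cdot) \quad \text{in } \Omega,\qquad \widehat u(\cdot,s)\in H_0^1(\Omega),
\end{equation*}
where $P(s)=s^{\alpha_1}+\sum_{j=2}^{\ell}q_js^{\alpha_j}$ and $Q(s)=s^{\alpha_1-1}+\sum_{j=2}^{\ell}q_js^{\alpha_j-1}$. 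For $\Re s$ large enough, $\Re P(s)>0$, so $P(s)+\lambda_n\ne0$ for every $n$; expanding in the eigenbasis $\{\phi_n\}$ yields
\begin{equation*}
\widehat u(x,s) = Q(s)\sum_{n=1}^{\infty}\frac{(a,\phi_n)}{P(s)+\lambda_n}\,\phi_n(x).
\end{equation*}
Since $\widehat u(\cdot,s)=0$ on $\omega$ and $Q(s)\not\equiv0$, after dividing out $Q(s)$ on the complement of its discrete zero set we get $F(P(s),x):=\sum_n (a,\phi_n)\phi_n(x)/(P(s)+\lambda_n)=0$ for $x\in\omega$ and $s$ in an open subset of the right half-plane.

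Third, I would argue via analytic continuation in the complex variable $z=P(s)$. For each fixed $x\in\omega$, $z\mapsto F(z,x)=\sum_n (a,\phi_n)\phi_n(x)/(z+\lambda_n)$ is meromorphic on $\mathbb{C}$ with simple poles only at $z=-\lambda_n$; since $P$ is a non-constant analytic function of $s$, its range is open, and hence $F(z,x)\equiv 0$ as a meromorphic function of $z$ by the identity theorem. Taking the residue at each distinct eigenvalue $\mu$ of $A$ yields
\begin{equation*}
(\pi_\mu a)(x) := \sum_{n:\,\lambda_n=\mu}(a,\phi_n)\phi_n(x) = 0,\qquad x\in\omega,
\end{equation*}
for every $\mu$. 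Each $\pi_\mu a$ lies in the $\mu$-eigenspace of $A$, so it is a classical solution of $(A-\mu)\pi_\mu a=0$ in $\Omega$ with smooth (by elliptic regularity on smooth coefficients) representative vanishing on the open set $\omega$. The classical unique continuation theorem for second-order elliptic operators (Aronszajn) gives $\pi_\mu a\equiv 0$ in $\Omega$ for every $\mu$, whence $a=0$ in $L^2(\Omega)$. Finally, by the uniqueness part of Theorem \ref{regularity results}, $u\equiv 0$ in $\Omega\times(0,T)$.

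The main obstacle is Step three: one must verify that the open set produced by $P(s)$ actually lies in the domain of meromorphy of $F(\cdot,x)$ and contains an accumulation point away from the poles $-\lambda_n$, so that the identity theorem is applicable, and one must separate out the zeros of $Q(s)$ to legitimately divide. The remaining technicalities (integrability of $u$ at infinity to guarantee the Laplace transform, and the pointwise-in-$\omega$ versus $L^2(\omega)$ interpretations of ``vanishing'') are more routine but need to be pinned down carefully, as does the smoothness of the eigenfunctions required for applying Aronszajn's theorem.
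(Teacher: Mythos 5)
Your proposal is correct and follows essentially the same route as the paper's own proof: extend the vanishing on $\omega$ to all $t>0$ by the analyticity of Theorem \ref{analytic F=0}, take the Laplace transform to obtain the eigenfunction expansion with denominators $P(s)+\lambda_n$, substitute $\eta=P(s)$ and continue meromorphically, extract each eigenprojection by a residue computation, and finish with classical elliptic unique continuation. The technical points you flag (legitimacy of the identity theorem after the substitution, and dividing out the multiplier) are exactly the ones the paper also handles, if somewhat tersely.
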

\begin{proof}
According to Theorem \ref{regularity results}, we can uniquely extend the existence interval of $u$ into $[0,\infty)$. Therefore, we can describe the solution $u(t)$ to (\ref{q_j constant}) by the Laplace transform:  
\[
\mathcal{L}(u)(x,s) = \sum_{n=1}^{\infty} h_n(s)(a,\phi_n)\phi_n(x),
\quad x\in \Omega, \thinspace \Re{s} > M.
\]
Here $M>0$ is a constant such that the Laplace transform of $u$ converges for $\Re{s}>M$.
\[
h_n(s) = \frac{s^{-1}
\left(s^{\alpha_1} + \sum_{j=2}^{\ell}q_js^{\alpha_j}
\right)}
{s^{\alpha_1} + \sum_{j=2}^{\ell}q_js^{\alpha_j}
+ \lambda_n}.
\] 
Let $\omega \subset \Omega$ be an arbitrarily fixed sub domain. According to the Theorem \ref{analytic F=0},
we have $u:(0,\infty)\rightarrow L^2(\Omega)$ can be analytically extended to a sector 
$\mathcal{S}=\{ z\neq0;|\arg z|<\frac{\pi}{2} \}$. From the definition for the analytic of vector-valued
function, we see that
\[
\lim_{\Delta z\rightarrow0} \frac{u(z+\Delta z)-u(z)}{\Delta z}
\]
exists(in the topology of $L^2(\Omega)$), for any $z\in \mathcal{S}$. Then $u:\mathcal{S}\rightarrow L^2(\omega)$
is also analytic.
Therefore, $u:\mathcal{S}\rightarrow L^2(\omega)$ is also weakly analytic
(see, e.g., Theorem 3.31 on p.82 in \cite{Rudin1}). 
That is, for any $\varphi\in L^2(\omega)$, the function $(u(z),\varphi)
_{L^2(\omega)}$ is analytic in the ordinary sense. $u|_{\omega\times(0,T)}=0$ 
derives $(u(t),\varphi)_{L^2(\omega)}=0$ for
any $t\in(0,T)$. By analyticity, we recognize that $(u(t),\varphi)_{L^2(\omega)}=0$ for any $t\in(0,\infty)$. 
Since $\varphi$ is chosen arbitrarily, so $u=0$ in $\omega\times(0,\infty)$. 
Therefore
\[
\mathcal{L}(u)(s)
=\sum_{n=1}^{\infty}
\frac{s^{\alpha_1} + \sum_{j=2}^{\ell}q_js^{\alpha_j}}
{s^{\alpha_1} + \sum_{j=2}^{\ell}q_js^{\alpha_j} + \lambda_n}
(a,\phi_n)\phi_n = 0\ \text{in}\ \omega, \thinspace \Re{s} > M.
\]
Setting $\eta = s^{\alpha_1} + \sum_{j=2}^{\ell}q_js^{\alpha_j}$, we 
see that $\eta$ varies over some domain $E \in \mathbb{C}$ as 
$s$ varies over $\Re{s} > M$.
Therefore
\begin{align} \label{identity zero}
 \sum_{n=1}^{\infty}
\frac{1}{\eta + \lambda_n}(a,\phi_n)\phi_n(x) = 0,\thinspace \text{in}\ \omega,\ \eta \in E.
\end{align}
We set $\sigma(A)=\{\mu_k\}_{k\in\mathbb{N}}$ and we denote by $\{\varphi_{kj}\}_{1\leq j\leq m_k}$ an orthonormal 
basis of Ker$(\mu-A)$. Note that we regards $\sigma(A)$ as set, not as sequence with multiplicities. Therefore we can rewrite (\ref{identity zero}) by
\begin{align} \label{identity zero multi}
 J(\eta):=\sum_{k=1}^{\infty} \sum_{j=1}^{m_k}
\frac{1}{\eta + \mu_k}(a,\phi_n)\varphi_{kj} = 0\ \text{in}\ \omega,\thinspace \forall \eta \in E.
\end{align}
It is easy to see that $J:E\rightarrow L^2(\Omega)$ is analytic. Moreover, we can derive that $J(\eta)$
defined in (\ref{identity zero multi}) holds for $\eta\in\mathbb{C}\backslash\{-\mu_k\}_{k\in\mathbb{N}}$. 
We can take a suitable disk which includes $-\mu_k$ and does not include $\{-\mu_i\}_{i\neq k}$. 
Integrating (\ref{identity zero multi}) in this disk, we have
 \[
u_k=\sum_{j=1}^{m_k} (a,\varphi_{kj})\varphi_{kj}=0\ \text{in}\ \omega.
 \]
Since $(A-\mu_k)u_k=0$ and $u_k=0$ in $\omega$, the unique continuation (e.g., Isakov \cite{Isakov}) implies 
$u_k=0$ in $\Omega$ for each $k\in\mathbb{N}$. Since $\{\varphi_{kj}\}_{1\leq j\leq m_k}$ is linearly independent in $\Omega$, we see that $(a,\varphi_{kj})=0$ for $1\leq j\leq m_k$, $k\in\mathbb{N}$. 
Therefore $u=0$ in $\Omega\times(0,T)$. This completes the proof of the theorem.
\end{proof}

\begin{corollary}
Let $\Gamma$ be an open subset of $\partial\Omega$, and $u\in C([0,T];L^2(\Omega))\cap C((0,T];H^2(\Omega)\cap H_0^1(\Omega))$ satisfy the problem (\ref{q_j constant})
and $\partial_{\nu}u|_{\Gamma\times(0,T)}=0$. Then $u=0$ in $\Omega\times(0,T)$.
\end{corollary}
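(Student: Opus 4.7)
The plan is to reduce this corollary to Theorem \ref{WUC} by extending $u$ across $\Gamma$ by zero and applying weak unique continuation on a slightly larger domain. First I would choose an open set $V\subset\mathbb{R}^d\setminus\overline{\Omega}$ whose closure meets $\partial\Omega$ in a relatively open subset of $\Gamma$, so that $\widetilde{\Omega}$, obtained by gluing $\Omega$ and $V$ across that piece, is a bounded domain with smooth boundary. Then I would extend the coefficients $a_{ij}, b$ of $\mathcal{A}$ to $\widetilde{\Omega}$ so that the extended operator $\widetilde{\mathcal{A}}$ remains symmetric, uniformly elliptic with $\widetilde{b}\leq 0$; this is standard via boundary straightening and a partition of unity.

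Next I would define $\widetilde{u}(x,t):=u(x,t)$ for $x\in\Omega$ and $\widetilde{u}(x,t):=0$ for $x\in V$, and verify that this gluing is admissible. Because $u|_{\Gamma\times(0,T)}=0$ (from the Dirichlet condition on all of $\partial\Omega$) and $\partial_\nu u|_{\Gamma\times(0,T)}=0$ by hypothesis, the Cauchy data of $u$ on $\Gamma$ match those of $0$ from the $V$ side, so for each $t\in(0,T]$ one has $\widetilde{u}(\cdot,t)\in H^2(\widetilde{\Omega})\cap H^1_0(\widetilde{\Omega})$ with $\widetilde{\mathcal{A}}\widetilde{u}(\cdot,t)\in L^2(\widetilde{\Omega})$, and $\widetilde{u}\in C([0,T];L^2(\widetilde{\Omega}))$. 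The equation $\partial_t^{\alpha_1}\widetilde{u}+\sum_{j=2}^\ell q_j\partial_t^{\alpha_j}\widetilde{u}=-\widetilde{\mathcal{A}}\widetilde{u}$ then holds on $\widetilde{\Omega}\times(0,T)$: it is just (\ref{q_j constant}) on $\Omega$, and both sides vanish identically on $V$. The initial datum of $\widetilde{u}$ is the extension $\widetilde{a}$ of $a$ by zero, which still lies in $L^2(\widetilde{\Omega})$, and the $q_j$ are unchanged constants, so the hypotheses of Theorem \ref{WUC} for the extended problem on $\widetilde{\Omega}$ are all met.

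At this point I would apply Theorem \ref{WUC} to $\widetilde{u}$ on $\widetilde{\Omega}$ with the choice of subdomain $\omega:=V$: since $\widetilde{u}\equiv 0$ on $V\times(0,T)$ by construction, the theorem delivers $\widetilde{u}\equiv 0$ on $\widetilde{\Omega}\times(0,T)$, and restricting back to $\Omega$ gives $u\equiv 0$ in $\Omega\times(0,T)$.

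The main obstacle I expect is the $H^2$-gluing step in the second paragraph: one must justify that simultaneous vanishing of the Dirichlet and Neumann traces on $\Gamma$ is exactly the compatibility condition for the zero-extension of $u(\cdot,t)$ to lie in $H^2(\widetilde{\Omega})$, and confirm that no singular contribution appears at $\Gamma$ in $\widetilde{\mathcal{A}}\widetilde{u}$ so that the pointwise equation survives on all of $\widetilde{\Omega}\times(0,T)$. Both are standard once one works in flat boundary charts near a point of $\Gamma$, but they are the only places where genuine analysis, as opposed to mechanical bookkeeping, is needed.
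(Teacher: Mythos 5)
Your proposal is correct and follows essentially the same route as the paper: extend $u$ by zero across $\Gamma$ to a smoothly enlarged domain $\widetilde{\Omega}$, use the matching Dirichlet and Neumann traces on $\Gamma$ to get $\widetilde{u}(\cdot,t)\in H^2(\widetilde{\Omega})\cap H^1_0(\widetilde{\Omega})$, and apply Theorem \ref{WUC} with the appended region as the subdomain where $\widetilde{u}$ vanishes. The only cosmetic difference is that the paper verifies the extended function solves the problem in its $J^{\alpha_1}$-integrated (Volterra) form rather than the pointwise differential form, which sidesteps the trace-matching subtlety you flag at the end.
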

\begin{proof}
Since the boundary $\partial\Omega$ of $\Omega$ is smooth enough, we can choose an open set $\omega$ such that
$\omega\cap\Omega=\Gamma$ and the boundary of the new domain $\widetilde{\Omega}:=\Omega\cup\omega$ is smooth enough. We set
\begin{align}
\widetilde{u}(x,t)
:=\left\{
\begin{array}{ll}
    u(x,t), &\forall (t,x)\in\Omega\times(0,T), \\
    0,      &\forall (t,x)\in\omega\times(0,T). \\
\end{array}
\right.
\end{align}
According to the condition $u|_{\partial\Omega\times(0,T)}=\partial_{\nu}u|_{\Gamma\times(0,T)}=0$, 
it is easy to see that the new function $\widetilde{u}$ belongs to 
$C([0,T];L^2(\widetilde{\Omega}))\cap C((0,T];H^2(\widetilde{\Omega})\cap H_0^1(\widetilde{\Omega}))$.

On the other hand, the proof of the weak unique continuation is divided into the following steps:
Step 1. Extending existence interval of the solution to $(0,\infty)$ and taking the fractional integral operator $J^{\alpha_1}$ on the both side of the equation, that is,
\begin{equation} \label{J^alpha_1 q_j constant}
   \left\{
  {\begin{array}{*{20}c}
  { u-a + \sum_{j=2}^{\ell} q_j J^{\alpha_1-\alpha_j} (u-a)
   =-\mathcal{A}J^{\alpha_1}u },\ x\in\Omega,\ t>0, \hfill \\
  {u=0,\ x\in \partial\Omega,\ t>0.} \hfill
 \end{array} } \right.
 \end{equation}
Step 2. Taking Laplace transform on the both side of the above equation, we can obtain the 
weak unique continuation by some results in the complex analysis.

Using the fact that $u$ is also the solution of problem (\ref{J^alpha_1 q_j constant}), then $\widetilde{u}$ 
is the solution of the following problem
\begin{equation}
   \left\{
  {\begin{array}{*{20}c}
  { \widetilde{u}-\widetilde{a} + \sum_{j=2}^{\ell} q_j J^{\alpha_1-\alpha_j} (\widetilde{u}-\widetilde{a})
   =-\mathcal{A}J^{\alpha_1}\widetilde{u} },\ t>0, \hfill \\
  {\widetilde{u}=0,\ x\in \partial\widetilde{\Omega},\ t>0,} \hfill
 \end{array} } \right.
 \end{equation}
where
\begin{align}
\widetilde{a}(x)
:=\left\{
\begin{array}{ll}
    a(x), &\forall x\in\Omega, \\
    0,    &\forall x\in\omega. \\
\end{array}
\right.
\end{align}
From Theorem \ref{WUC}, we deduce $\widetilde{u}=0$ in $\widetilde{\Omega}\times(0,T)$, that is, 
$u=0$ in $\Omega\times(0,T)$. This completes the proof of corollary.
\end{proof}

\section{Conclusions}
Initial-boundary value problem for the linear diffusion equation with multiple time-fractional
derivatives was investigated. We proved unique existence of the solution by the eigenfunction expansion 
and Laplace transform as well as the H\"{o}lder regularity and related properties. 
\\

\noindent (i) In the case where the equation with single fractional time-derivative, the solution
to the initial-boundary value problem in \cite{SakYam} can easily achieve the $C((0,T];H^2(\Omega))$ regularity, while we only proved that the solution is in $C((0,T];H^{2\gamma}(\Omega))$, $\gamma\in(0,1)$ (see Theorem 
\ref{regularity results}), and we do not know whether the solution can achieve 
the regularity as in \cite{SakYam}. 
\\

\noindent (ii) We only proved the H\"{o}lder's regularity with the initial condition being zero. We guess that there may be a H\"{o}lder regularity similar to Theorem 2.4 in \cite{SakYam}. 
\\

\noindent (iii) For the classical diffusion equation, we have that the unique continuation principle holds without $u|_{\partial\Omega\times(0,T)}=0$ (e.g., 
\cite{Isakov}). However for our case, we do not know whether the uniqueness holds without such kind of assumption.


 \end{document}